\numberwithin{equation}{section}
\theoremstyle{plain}
\newtheorem{theorem}{Theorem}[section]
\newtheorem{proposition}[theorem]{Proposition}
\newtheorem{lemma}[theorem]{Lemma}
\newtheorem{example}[theorem]{Example}
\newtheorem*{theorem*}{Theorem}
\newtheorem*{lemma*}{Lemma}
\newtheorem*{proposition*}{Proposition}
\newtheorem*{corollary*}{Corollary}
\theoremstyle{definition}
\newtheorem{definition}[theorem]{Definition}
\newcommand{\bR}{\mathbb R}
\title{Diophantine approximation by rational numbers of certain parity types}
\author{Dong Han Kim}
\address{Department of Mathematics Education, 
Dongguk University - Seoul, Seoul, 04620 Korea}
\email{kim2010@dgu.ac.kr}
\author{Seul Bee Lee}
\address{Center for Geometry and Physics, Institute for Basic Science (IBS), Pohang 37673, Korea}
\email{seulbee.lee@ibs.re.kr}
\author{Lingmin Liao}
\address{School of Mathematics and Statistics, Wuhan University, Wuhan 430072, China}
\email{lmliao@whu.edu.cn}
\date{\today}
\begin{document}
\maketitle

\begin{abstract}
For a given irrational number, we consider the properties of best rational approximations of given parities.
There are three different kinds of rational numbers according to the parity of the numerator and denominator, say odd/odd, even/odd and odd/even rational numbers.
We study algorithms to find best approximations by rational numbers of given parities and compare these algorithms with continued fraction expansions.
\end{abstract}

\section{Introduction}
A fundamental problem in Diophantine approximation is to find rational numbers $p/q$ minimizing $|q x-p|$ for $x \in\bR \setminus \mathbb{Q}$ with some constraints of $p$ and $q$.
A rational $p/q$ in lowest terms is called a \emph{best approximation} of $x$ if any rational $a/b$ in lowest terms apart from $p/q$ such that $0 < b \le q$ satisfies
$$|qx-p|<|bx-a|.$$
We denote by $\mathscr{B}(x)$ the set of all best approximations of $x$.
One significant property of the regular continued fraction is that a rational number is a principal convergent of $x$ if and only if it is a best approximation of $x$ or it is $\lfloor x \rfloor$, which was first proven by Lagrange, see \cite{Khi64}*{Section 6}.
Every real number $x$ has a continued fraction expansion 
$$ 
x=a_0+\cfrac{1}{a_1+\cfrac{1}{a_2+\ddots}}=:[a_0;a_1,a_2,\cdots],
$$
where $a_0=\lfloor x \rfloor\in\mathbb Z$, and $a_n\in\mathbb N$ for $n\ge 1$.
Let $p_n/q_n=[a_0;a_1,a_2,\cdots,a_n]$ be a principal convergent
of $x \in \mathbb R \setminus \mathbb{Q}$ for $n\ge 0$ with the convention $p_{-1} = 1$, $q_{-1} = 0$  and  $p_0 = a_0$, $q_0 = 1$.
By Lagrange's theorem, we have
\begin{equation}\label{eq:P=B}
\mathscr{B}(x) = 
\begin{cases}
\left\{ \frac{p_n}{q_n} : n \ge 0 \right\}, & \text{ if } a_1 \ge 2, 
\\
\left\{ \frac{p_n}{q_n} : n \ge 1 \right\}, &\text{ if } 
a_1 =1. 
\end{cases}
\end{equation}

For \emph{the even continued fraction}, which allows only positive even integer partial quotients  \cites{Sch82,Sch84}, an analogous theorem holds, i.e., the principal convergents of the even continued fraction are the best approximations among the rationals of the form even/odd and odd/even, and vice versa \cite{SW14}, see also \cite{KL96}.
In \cite{KLL22}, the authors defined a continued fraction algorithm, called the \emph{odd-odd continued fraction}, giving the best approximations of the form odd/odd.
The aim of the present paper is to further investigate the best approximations of specific parity and the associated continued fraction algorithms.

We classify the set of rational numbers by the following three subsets with the parity conditions:
\begin{align*}
\mathbb Q^{(0)} &:= \left\{ \frac pq \in \mathbb Q \ : \ p \equiv 0,~ q \equiv 1 \pmod 2 \right\}, \\
\mathbb Q^{(1)} &:= \left\{ \frac pq \in \mathbb Q \ : \ p \equiv 1,~ q \equiv 1 \pmod 2 \right\}, \\
\mathbb Q^{(\infty)} &:= \left\{ \frac pq \in \mathbb Q \ : \ p \equiv 1,~ q \equiv 0 \pmod 2 \right\}.
\end{align*}
We call an element of $\mathbb Q^{(0)}$ (respectively $\mathbb Q^{(1)}$, $\mathbb Q^{(\infty)}$) \emph{a $(0)$-rational} (respectively \emph{a $(1)$-rational, an $(\infty)$-rational}) number.
We also denote 
$$
\mathbb Q^{(0,1)} = \mathbb Q^{(0)} \sqcup \mathbb Q^{(1)}, \quad
\mathbb Q^{(0,\infty)} = \mathbb Q^{(0)} \sqcup \mathbb Q^{(\infty)}, \quad
\mathbb Q^{(1,\infty)} = \mathbb Q^{(1)} \sqcup \mathbb Q^{(\infty)}.
$$
From now on, we indicate one of $0,1,\infty$ with a symbol $\alpha,\beta,\gamma$ for brevity.
We define a best approximation among a specific class $\mathbb Q^{(\alpha)}$ or $\mathbb Q^{(\beta,\gamma)}$ for $\beta\neq \gamma$.
\begin{definition}\label{de:(a)BA}
For given $x \in \mathbb R\setminus\mathbb Q$, we define
\emph{a best $(\alpha)$-rational (respectively $(\beta,\gamma)$-rational) approximation} by a rational $p/q$ satisfying
$$\left| qx- p \right|<\left| bx- a \right|$$   
for any $a/b \in \mathbb Q^{(\alpha)}$ (respectively $\mathbb{Q}^{(\beta,\gamma)}$) apart from $p/q$ such that $ 0< b \le q$.
\end{definition}

Denote by $\mathscr{B}^{(\alpha)}$ (respectively $\mathscr{B}^{(\beta,\gamma)}$) the set of the best $(\alpha)$-rational (respectively $(\beta,\gamma)$-rational) approximations.
By definition, it is deduced that for any $\alpha, \beta \in \{ 0, 1, \infty\}$ with $\alpha \ne \beta$, we have
\begin{equation}\label{def_cor1}
\mathscr{B}(x) \cap \mathbb Q^{(\alpha)}\subset \mathscr{B}^{(\alpha)}(x),\quad
\mathscr{B}(x) \cap \mathbb Q^{(\alpha,\beta)} \subset \mathscr{B}^{(\alpha, \beta)}(x), \quad
\mathscr{B}^{(\alpha, \beta)}(x) \cap \mathbb Q^{(\alpha)} \subset \mathscr{B}^{(\alpha)}(x).
\end{equation}
Therefore, we have
\begin{equation}\label{def_cor2}
\mathscr{B}^{(\alpha, \beta)}(x) 
= \left( \mathscr{B}^{(\alpha, \beta)}(x) \cap \mathbb Q^{(\alpha)} \right) 
\sqcup 
\left( \mathscr{B}^{(\alpha, \beta)}(x) \cap \mathbb Q^{(\beta)} \right)
\subset 
\mathscr{B}^{(\alpha)}(x) \sqcup \mathscr{B}^{(\beta)}(x).
\end{equation}

We call $p/q$ \emph{a best signed approximation of $x\in\mathbb R$} 
if any rational $a/b$ in lowest terms apart from $p/q$ such that $0 < b \le q$ and $\mathrm{sgn}(bx-a)=\mathrm{sgn}(qx-p)$
satisfies 
$$bx-a  < qx-p < 0 \quad \text{ or } \quad
 0 < qx-p < bx-a$$
depending on the sign of $qx-p$. 
We denote by $\mathscr{S}(x)$ the set of the best signed approximations of $x$.
Obviously, we have $\mathscr{B}(x) \subset \mathscr{S}(x)$.
We decompose $\mathscr{S}(x)\setminus \mathscr{B}(x)$ by the following definition.
\begin{definition}\label{de:S_alpha}
We write $p/q \in \mathscr{S}_\alpha (x)$ if $p/q \in \mathscr{S}(x) \setminus \mathscr{B}(x)$ and there exists $a/b \in \mathbb Q^{(\alpha)}$ not equal to $p/q$ such that $0 < b \le q$ and $|bx - a | \le | qx - p|$.
\end{definition}
It is clear that 
\begin{equation}\label{mmm}
\mathscr{S}(x) \setminus \mathscr{B}(x) = \mathscr{S}_0(x) \cup \mathscr{S}_1(x) \cup \mathscr{S}_\infty (x).
\end{equation}
Moreover, the unions are disjoint. 
See Proposition~\ref{Salpha} below for the detail.

We first characterize the set of best $(\alpha)$-rational approximations and the set of best $(\alpha,\beta)$-rational approximations using best signed approximations.

\begin{theorem}\label{thm1}
Let $x$ be an irrational number. 
\begin{enumerate}[label=(\roman*)]
\item\label{it:thm1-1} For any $\alpha \in \{ 0, 1, \infty\}$, we have
$$
\mathscr{B}^{(\alpha)}(x) = \mathscr{S} (x) \cap \mathbb Q^{(\alpha)}.
$$

\item
For any $\{\alpha, \beta, \gamma\} = \{ 0, 1, \infty\}$, 
we have
$$
\mathscr{B}^{(\alpha,\beta)} (x) = \left( \mathscr{B} (x) \cap \mathbb{Q}^{(\alpha,\beta)} \right) \sqcup \mathscr{S}_\gamma (x).
$$
\end{enumerate}
\end{theorem}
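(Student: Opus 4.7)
The plan is to prove part~\ref{it:thm1-1} first and derive part~(ii) as a combinatorial consequence of (i) together with the disjointness of $\mathscr{S}_0(x), \mathscr{S}_1(x), \mathscr{S}_\infty(x)$ from Proposition~\ref{Salpha}. The main algebraic engine for part~(i) is a \emph{parity gap}: for two distinct rationals $p/q, a/b$ in lowest terms both lying in $\mathbb{Q}^{(\alpha)}$, the integer $pb - aq$ is even and nonzero, so $|pb - aq| \ge 2$. This will be exploited because $(p \pm a)/(q \pm b)$ becomes reducible by at least a factor of $2$, producing a new rational with denominator strictly smaller than $\max(q, b)$.

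For the inclusion $\mathscr{S}(x) \cap \mathbb{Q}^{(\alpha)} \subseteq \mathscr{B}^{(\alpha)}(x)$, I would take $p/q$ in the left-hand side and a competitor $a/b \in \mathbb{Q}^{(\alpha)}$ with $a/b \ne p/q$ and $0 < b \le q$. When $bx - a$ and $qx - p$ share a sign the property of $\mathscr{S}(x)$ closes the case. Otherwise, WLOG $a/b < x < p/q$; then $p - a$ and $q - b$ are both even, so the reduced form $P/Q$ of $(p - a)/(q - b)$ satisfies $Q \le (q - b)/2 < q$, and a direct computation gives $|Qx - P| \le (|qx - p| + |bx - a|)/2$. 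Under the contradiction hypothesis $|bx - a| \le |qx - p|$ this forces $|Qx - P| \le |qx - p|$ with $P/Q$ on the side of $p/q$, violating $p/q \in \mathscr{S}(x)$. The trivial edge case $b = q$ is dispatched by $|p - a| \ge 2$, which gives $|qx - p| \ge 1$; this is inconsistent with $p/q \in \mathscr{S}(x)$, which always satisfies $|qx - p| < 1$. For the reverse inclusion, assume $p/q \in \mathscr{B}^{(\alpha)}(x) \setminus \mathscr{S}(x)$; then some $c/d$ on the same side as $p/q$ has $d \le q$, $c/d \ne p/q$, and $|dx - c| < |qx - p|$. If $c/d \in \mathbb{Q}^{(\alpha)}$ we are done. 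Otherwise I form $(2c - p)/(2d - q)$: the numerator and denominator share parities with $p$ and $q$, so the reduction lies in $\mathbb{Q}^{(\alpha)}$, and the identity $|(2d - q)x - (2c - p)| = \bigl| |qx - p| - 2|dx - c| \bigr|$ delivers an error strictly less than $|qx - p|$ (both sign cases use $|dx - c| < |qx - p|$). Provided $0 < |2d - q| \le q$, this produces a $\mathbb{Q}^{(\alpha)}$-competitor contradicting $p/q \in \mathscr{B}^{(\alpha)}(x)$.

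The main obstacle is the degenerate case $2d = q$, which can only arise for $\alpha = \infty$ (since $\alpha \in \{0,1\}$ forces $q$ odd). Here I would exploit the explicit identity $p - 2c = \pm(|qx - p| - 2|dx - c|)$: the integer $p - 2c$ is odd, hence of absolute value at least $1$, and combined with $|dx - c| < |qx - p|$ it gives $|qx - p| > 1$. On the other hand, $p/q \in \mathscr{B}^{(\infty)}(x)$ forces $|qx - p| < 1$ by comparison with the nearest odd integer to $2x$, which yields some $a/2 \in \mathbb{Q}^{(\infty)}$ with $b = 2 \le q$. Part~(ii) then assembles cleanly: by~(i), $\mathscr{B}^{(\alpha,\beta)}(x) \subseteq \mathscr{B}^{(\alpha)}(x) \sqcup \mathscr{B}^{(\beta)}(x) = \mathscr{S}(x) \cap \mathbb{Q}^{(\alpha,\beta)}$. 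Using \eqref{mmm} together with the observation (immediate from (i)) that $\mathscr{S}_\delta(x) \cap \mathbb{Q}^{(\delta)} = \emptyset$, so $\mathscr{S}_\gamma(x) \subseteq \mathbb{Q}^{(\alpha,\beta)}$, it remains to check class by class. Proposition~\ref{Salpha}'s disjointness shows that $p/q \in \mathscr{S}_\gamma(x)$ is beaten only by $\mathbb{Q}^{(\gamma)}$ and hence persists in $\mathscr{B}^{(\alpha,\beta)}(x)$, while elements of $\mathscr{S}_\alpha(x) \cap \mathbb{Q}^{(\beta)}$ or $\mathscr{S}_\beta(x) \cap \mathbb{Q}^{(\alpha)}$ are beaten by $\mathbb{Q}^{(\alpha)} \cup \mathbb{Q}^{(\beta)} \subseteq \mathbb{Q}^{(\alpha,\beta)}$ and are excluded. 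Since $\mathscr{B}(x) \cap \mathbb{Q}^{(\alpha,\beta)} \subseteq \mathscr{B}^{(\alpha,\beta)}(x)$ trivially, this yields the claimed decomposition $\mathscr{B}^{(\alpha,\beta)}(x) = (\mathscr{B}(x) \cap \mathbb{Q}^{(\alpha,\beta)}) \sqcup \mathscr{S}_\gamma(x)$.
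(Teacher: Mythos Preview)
Your proof is correct. For part~(i), your approach is genuinely different from the paper's: the paper proceeds geometrically, introducing the parallelograms $P_{\mathscr S}(x,\mathbf v)$ and $P_{\mathscr B}(x,\mathbf v)$ and proving a lattice lemma (Lemma~\ref{lattice}) about integral points in these regions, from which Proposition~\ref{Salpha} (and hence (i)) drops out. You instead argue arithmetically: the parity constraint forces $pb-aq$ to be even, which lets you halve the mediant $(p-a)/(q-b)$ in one direction, and in the other direction the reflected point $(2c-p)/(2d-q)$ inherits the parity of $(p,q)$ while having strictly smaller error. Your route is more elementary and explicitly constructive, at the cost of a case analysis (same/opposite sign, the $b=q$ and $2d=q$ degeneracies). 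The paper's geometric argument is more uniform and, as a bonus, yields the uniqueness statement in Lemma~\ref{lattice}(ii) that underlies the disjointness $\mathscr S_\alpha(x)\cap\mathscr S_\beta(x)=\emptyset$---which you still borrow from Proposition~\ref{Salpha} for part~(ii). Once (i) and that disjointness are in hand, your combinatorial assembly of part~(ii) is essentially the same as the paper's.
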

As a corollary, we have
$$
\mathscr{S}(x) = \mathscr{B}^{(0)} (x) \sqcup \mathscr{B}^{(1)} (x) \sqcup \mathscr{B}^{(\infty)} (x) = \mathscr{B}^{(0,1)} (x) \cup \mathscr{B}^{(0,\infty)} (x) \cup \mathscr{B}^{(1,\infty)} (x)
$$
and 
\begin{equation*}
\mathscr{B}^{(\alpha,\beta)} (x) \cap \mathscr{B}^{(\alpha,\gamma)} (x) = \mathscr{B} (x) \cap \mathbb{Q}^{(\alpha)}.
\end{equation*}
The proof of Theorem~\ref{thm1} is given in Section~\ref{sec2}.
We remark that for any $\alpha, \beta \in \{ 0, 1, \infty\}$ with $\alpha \ne \beta$, we have, by \eqref{def_cor2},
$$
\mathscr{B}^{(\alpha,\beta)} (x) \subset \mathscr{B}^{(\alpha)}(x) \sqcup \mathscr{B}^{(\beta)}(x)
= \mathscr{S}(x) \cap \mathbb Q^{(\alpha,\beta)}.
$$
However, the equality does not hold in general.
For an example, let $x = \sqrt 2 -1$, then $\frac 37 \in \mathscr{S}(x) \cap \mathbb Q^{(0,1)}$   (see Example~\ref{ex:bs}). 
However, since $\frac 25 \in \mathbb Q^{(0,1)}$ satisfies $|5x -2| < |7x-3|$, we deduce that $\frac 37 \notin \mathscr{B}^{(0,1)} (x)$. 
In Section~\ref{sec:CF}, we prove the equivalence between the intermediate convergents and the signed best approximation, and we specify which intermediate convergent will be a best ($\alpha$)-rational approximation and a best ($\alpha$, $\beta$)-rational approximation.

In the rest of the article, we investigate algorithms to find best approximations along the specific parities.
We use the reflections 
$$
H_0 := \begin{pmatrix} -1 & 2 \\ 0 & 1 \end{pmatrix}, \qquad
H_1 := \begin{pmatrix} -1 & 0 \\ 0 & 1 \end{pmatrix}, \qquad
H_\infty := \begin{pmatrix} 1 & 0 \\ 2 & -1 \end{pmatrix}.
$$
on the upper half plane $\mathbb H$ which preserve the parities of the numerator and denominator of fractions on the boundary.
The relation between the continued fraction and cutting sequence of the geodesic on the upper half plane $\mathbb H$ is well studied. 
See \cites{Ser85, Ser91, EW11} for the regular continued fraction, \cite{BM18} for the even continued fraction.
We consider a symbolic sequence of the geodesic 
associated with the triangle group $\Delta$ generated by $H_0, H_1, H_\infty$ in Section~\ref{sec:Delta}.
Using this symbolic sequence, we give  algorithms generating best approximations with the parity condition.

\begin{theorem}\label{thm2}
Let $x = [ \alpha_1, \alpha_2, \dots ]_\Delta$ be the $\Delta$-expression of $x$ which will be defined in Section~\ref{sec:Delta}.
Then we have 
\begin{align*}
\mathscr{B}(x) &= \{ [ \alpha_1, \dots, \alpha_{m-1}, \overline{ \alpha_m, \alpha_{m+1}}]_\Delta \in \mathbb Q \, | \, m \ge | a_0| +1 \}, \\
\mathscr{S}(x) &= \{ [\alpha_1, \dots, \alpha_{m-1}, \overline{ \alpha_m, \delta_{m+1}}]_\Delta \in \mathbb Q \, | \, m \ge |a_0| +1 \}, \\
\mathscr{B}^{(\alpha)}(x) &= \{ [ \alpha_1, \dots, \alpha_{m-1}, \overline{ \alpha_m, \delta_{m+1}}]_\Delta \in \mathbb Q \, | \, \alpha_{m+1} = \alpha, \ m \ge |a_0| +1 \}, \\
\mathscr{B}^{(\alpha,\beta)}(x) &= \{ [ \alpha_1, \dots, \alpha_{m-1}, \overline{ \alpha_m, \gamma }]_\Delta \in \mathbb Q \, | \, \alpha_m \ne \gamma, \ m \ge |a_0| +1 \}.
\end{align*}
where $\delta_{m+1}$ satisfies $\{ \alpha_{m}, \alpha_{m+1}, \delta_{m+1} \} = \{ 0 , 1 , \infty \}$ for $m \ge 1$
and $\{\alpha, \beta, \gamma\} = \{ 0,1,\infty\}$.
\end{theorem}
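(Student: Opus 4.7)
The plan is to reduce all four identities to Theorem~\ref{thm1} together with two independent ingredients: a \emph{parity rule} that reads off the parity class of any rational $[\alpha_1, \dots, \alpha_{m-1}, \overline{\alpha_m, \beta}]_\Delta$ from its $\Delta$-expression, and the dictionary between the $\Delta$-expression and the regular continued fraction of $x$ developed in Section~\ref{sec:Delta}.

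I first establish the parity rule: for any $\beta \ne \alpha_m$, the rational $[\alpha_1, \dots, \alpha_{m-1}, \overline{\alpha_m, \beta}]_\Delta$ lies in $\mathbb Q^{(\gamma)}$, where $\gamma$ is the unique letter in $\{0,1,\infty\} \setminus \{\alpha_m, \beta\}$. A direct matrix computation shows that the products $H_0 H_\infty$, $H_0 H_1$, $H_1 H_\infty$ are parabolic with unique boundary fixed points $1$, $\infty = 1/0$, $0 = 0/1$, whose parity classes are respectively $(1)$, $(\infty)$, $(0)$---precisely the third letter in each case. Since each of $H_0, H_1, H_\infty$ preserves the three parity classes of boundary rationals (which is immediate from the matrix forms), applying $H_{\alpha_1}\cdots H_{\alpha_{m-1}}$ to any such fixed point leaves its parity class unchanged.

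With the parity rule in hand, the dictionary of Section~\ref{sec:Delta} identifies canonical closures $\overline{\alpha_m, \alpha_{m+1}}$ with the principal convergents $p_n/q_n$ of the regular continued fraction of $x$, and alternate closures $\overline{\alpha_m, \delta_{m+1}}$ with the intermediate convergents; the inequality $m \ge |a_0|+1$ discards the initial block of the $\Delta$-expansion encoding $\lfloor x\rfloor$. Combined with Lagrange's theorem \eqref{eq:P=B} this yields the formula for $\mathscr{B}(x)$, and combined with the identification of $\mathscr{S}(x)$ with the set of all principal and intermediate convergents proved in Section~\ref{sec:CF}, it yields the formula for $\mathscr{S}(x)$. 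For $\mathscr{B}^{(\alpha)}(x)$, Theorem~\ref{thm1}(i) and the parity rule suffice: the parity of the alternate closure equals the third letter of $\{\alpha_m, \delta_{m+1}\}$, namely $\alpha_{m+1}$, so the condition $\alpha_{m+1} = \alpha$ is precisely the restriction to $\mathbb Q^{(\alpha)}$.

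The last identity is the most delicate. By Theorem~\ref{thm1}(ii), $\mathscr{B}^{(\alpha,\beta)}(x) = (\mathscr{B}(x) \cap \mathbb Q^{(\alpha,\beta)}) \sqcup \mathscr{S}_\gamma(x)$, and I reorganize both summands into the uniform form $\overline{\alpha_{m'}, \gamma}$ with $\alpha_{m'} \ne \gamma$. By the parity rule a canonical closure lies in $\mathbb Q^{(\alpha,\beta)}$ iff $\gamma \in \{\alpha_m, \alpha_{m+1}\}$: if $\alpha_{m+1}=\gamma$ the tail is literally $\overline{\alpha_m, \gamma}$, while if $\alpha_m=\gamma$ one uses the shift identity $[\alpha_1,\dots,\alpha_{m-1},\overline{\alpha_m,\alpha_{m+1}}]_\Delta = [\alpha_1,\dots,\alpha_m,\overline{\alpha_{m+1},\alpha_m}]_\Delta$ (both sides representing the image under $H_{\alpha_1}\cdots H_{\alpha_{m-1}}$ of the common boundary fixed point of $H_{\alpha_m}$ and $H_{\alpha_{m+1}}$) to shift the tail into the target form at index $m+1$. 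For the second summand, I will show that $\mathscr{S}_\gamma(x)$ coincides with the set of intermediate convergents of parity not $\gamma$, namely the alternate closures with $\alpha_{m+1} \ne \gamma$ (equivalently $\delta_{m+1} = \gamma$), whose tail then reads $\overline{\alpha_m, \gamma}$ as required. The main obstacle I anticipate is exactly this final identification of $\mathscr{S}_\gamma(x)$: establishing that every intermediate convergent of parity $\ne \gamma$ is blocked by a $(\gamma)$-rational (per Definition~\ref{de:S_alpha}) will require a competition argument with the neighboring principal convergent, whose parity is then extracted from the parity rule.
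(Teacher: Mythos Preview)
Your overall strategy matches the paper's: the parity rule, the dictionary with the regular continued fraction via \eqref{HHH}, and the reduction to Theorem~\ref{thm1} together with Proposition~\ref{prop_int} are exactly the ingredients of Propositions~\ref{BA}, \ref{aBA}, \ref{abBA}. Your treatment of $\mathscr{B}(x)$, $\mathscr{S}(x)$, and $\mathscr{B}^{(\alpha)}(x)$ is correct and essentially identical to the paper's.

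The gap is in the last identity. The assertion that ``$\mathscr{S}_\gamma(x)$ coincides with the set of intermediate convergents of parity not $\gamma$'' is false, and the competition argument you anticipate cannot succeed. For $x=\sqrt2-1$ and $\gamma=0$, every intermediate convergent has parity~$(1)\ne 0$, yet $\tfrac13\in\mathscr{S}_\infty(x)$, not $\mathscr{S}_0(x)$ (Example~\ref{ex:bs}). The blocking rational in Definition~\ref{de:S_alpha} is the \emph{preceding principal convergent} $p_n/q_n$, and its parity is $\delta_{m+1}$, not $\alpha_{m+1}$; thus an intermediate convergent at index $m$ lies in $\mathscr{S}_{\delta_{m+1}}(x)$, and your parenthetical ``$\alpha_{m+1}\ne\gamma$ (equivalently $\delta_{m+1}=\gamma$)'' is not an equivalence either, since $\delta_{m+1}=\gamma$ also requires $\alpha_m\ne\gamma$.

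What the paper actually uses is the ``moreover'' clause of Proposition~\ref{BA}: the alternate closure at $m$ lies in $\mathscr{S}_{\delta_{m+1}}(x)$ exactly when $m=|a_0|+1$ or $\alpha_{m-1}=\alpha_{m+1}$, and lies in $\mathscr{B}(x)$ otherwise. With this, the inclusion $\mathscr{S}_\gamma(x)\subset\{\text{target set}\}$ is immediate ($\delta_{m+1}=\gamma$ gives the tail $\overline{\alpha_m,\gamma}$), while the reverse inclusion needs a short case split you have not planned for: given $\alpha_m\ne\gamma$ and $\alpha_{m+1}\ne\gamma$, either $\alpha_{m-1}=\gamma$, in which case your shift identity rewrites the element as a canonical closure at index $m-1$ in $\mathscr{B}(x)\cap\mathbb Q^{(\alpha,\beta)}$; or $\alpha_{m-1}\ne\gamma$, which together with $\alpha_{m-1}\ne\alpha_m$ forces $\alpha_{m-1}=\alpha_{m+1}$ and hence membership in $\mathscr{S}_\gamma(x)$. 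Replace your planned competition argument by this two-line dichotomy and the proof closes.
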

The proof of Theorem~\ref{thm2} is given in Section~\ref{sec:BA} by Propositions \ref{BA}, \ref{aBA} and \ref{abBA}.
In Section~\ref{sec:maps}, we explain the relation between various continued fraction maps and the $\Delta$-expression. 

\section{Best approximations of the specified parity}
\label{sec2}

In this section, we characterise the set of best $(\alpha)$- and $(\beta,\gamma)$-rational approximations and prove Theorem~\ref{thm1}.

Let $\mathbf v = (p,q)$ be a vector in $\mathbb Z^2$ and $x$ be an irrational number.
Denote by $P_{\mathscr S} (x,\mathbf v)$ the parallelogram given by two vectors $\mathbf x(x,\mathbf v) = (p-qx,0)$ and $\mathbf y(x,\mathbf v) = (qx,q)$ with the boundary, i.e.,
$$
P_{\mathscr S}(x, \mathbf v) = \{ a \mathbf x(x,\mathbf v) + b \mathbf y(x,\mathbf v) \in \mathbb R^2 \, | \  0 \le a,b \le 1 \}.
$$
We also define
$$
P_{\mathscr B}(x, \mathbf v) = \{ a \mathbf x(x,\mathbf v) + b \mathbf y(x,\mathbf v) \in \mathbb R^2 \, | \  -1 \le a,b \le 1 \}.
$$
See Figure~\ref{Fig_parallelograms}.
For a subset $P$ of $\mathbb R^2$, we denote by $-P=\{-\mathbf v: \mathbf v\in P\}$.

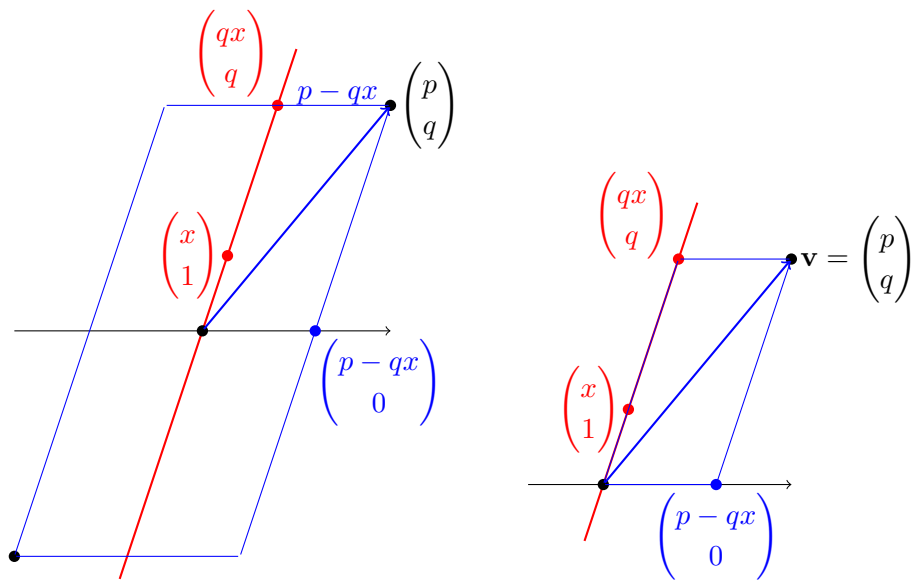
\begin{figure}
\begin{tikzpicture}[scale=0.5,inner sep=0mm]
	\draw[->] (-5, 0) -- (5, 0);
	\draw[thick,red] (-2.2, -6.6) -- (2.5, 7.5);
	\foreach \position in {
	(0, 0), (-5, -6), (5, 6)}
    \fill \position circle (0.15cm);
	\foreach \position in {
	(.66667, 2), (2,6)}
    \fill[red] \position circle (0.15cm);
    \foreach \position in {(3,0)}
    \fill[blue] \position circle (0.15cm);
	\node (O) at (0, 0) {};
	\node (P) at (5, 6) {};
	\node (P1) at (-1, 6) {};
	\node (P2) at (1,-6) {};
	\node (P3) at (-5,-6) {};
	\node[red, left, xshift=-3pt] (Q) at (.66667 ,2) {$\begin{pmatrix} x \\ 1 \end{pmatrix}$};
	\node[red, above left, xshift=-3pt, yshift=3pt] (Q) at (2 ,6) {$\begin{pmatrix} qx \\ q \end{pmatrix}$};
 	\node[blue, below right, yshift=-3pt] (Q) at (3 ,0) {$\begin{pmatrix} p-qx \\ 0 \end{pmatrix}$};
	\draw[thick, blue, ->] (O) -- (P) node[black, right, xshift=3pt]{$\begin{pmatrix} p \\ q \end{pmatrix}$};
	\draw[thin, blue] (P) -- (P1) node[above right,xshift=50pt] {$p-qx$};
	\draw[thin, blue] (P) -- (P2);
	\draw[thin, blue] (P3) -- (P1);
	\draw[thin, blue] (P3) -- (P2);
\end{tikzpicture}
\qquad
\begin{tikzpicture}[scale=0.5,inner sep=0mm]
	\draw[->] (-2, 0) -- (5, 0);
	\draw[thick,red] (-.5, -1.5) -- (2.5, 7.5);
	\foreach \position in {
	(0, 0), (5, 6)}
    \fill \position circle (0.15cm);
	\foreach \position in {
	(.66667, 2), (2,6)}
    \fill[red] \position circle (0.15cm);
     \foreach \position in {(3,0)}
    \fill[blue] \position circle (0.15cm);
	\node (O) at (0, 0) {};
	\node (P) at (5, 6) {};
	\node (P1) at (2, 6) {};
	\node (P2) at (3,0) {};
	\node (P3) at (0,0) {};
	\node[red, left, xshift=-3pt] (Q) at (.66667, 2) {$\begin{pmatrix} x \\ 1 \end{pmatrix}$};
	\node[red, above left, xshift=-3pt] (Q) at (2 ,6) {$\begin{pmatrix} qx \\ q \end{pmatrix}$};
	\node[blue, below, yshift=-3pt] (Q) at (3 ,0) {$\begin{pmatrix} p-qx \\ 0 \end{pmatrix}$};
	\draw[thick, blue, ->] (O) -- (P) node[black, right, xshift=3pt]{$\mathbf v = \begin{pmatrix} p \\ q \end{pmatrix}$};
	\draw[thin, blue] (P) -- (P1);
	\draw[thin, blue] (P) -- (P2);
	\draw[thin, blue] (P3) -- (P1);
	\draw[thin, blue] (P3) -- (P2);
\end{tikzpicture}
\caption{The parallelograms 
$P_{\mathscr B}(x,\mathbf v_{p/q})$ (left) and $P_{\mathscr S}(x,\mathbf v_{p/q})$ (right)}
\label{Fig_parallelograms}
\end{figure}

We call a nonzero integral vector $\mathbf v \in \mathbb Z^2$ \emph{primitive} if $\mathbf v \ne c\mathbf w $ for any $c \ge 2$, $w \in \mathbb Z^2$.
For a rational number $p/q$, let $\mathbf v_{p/q} = (p,q)$. 
A rational number $p/q$ is a best approximation of $x$ if and only if there are no primitive vectors of $\mathbb Z^2$ in $P_{\mathscr B} (x,\mathbf v_{p/q})$ except for $\mathbf v_{p/q}$, $-\mathbf v_{p/q}$ and $\mathbf 0 = (0,0)$.
Similarly, a rational number $p/q$ is a best signed approximation of $x$ if and only if there are no vectors of $\mathbb Z^2$ in $P_{\mathscr S} (x,\mathbf v_{p/q})$ except for $\mathbf v_{p/q}$ and $\mathbf 0$.

Let
\begin{align*}
\Lambda^{(0)} &= \left \{ (p,q) \in \mathbb Z^2 \, | \, p \equiv 0 \pmod 2 \right\}, \\
\Lambda^{(1)} &= \left \{ (p,q) \in \mathbb Z^2 \, | \,p+q \equiv 0 \pmod 2 \right\}, \\
\Lambda^{(\infty)} &= \left \{ (p,q) \in \mathbb Z^2 \, | \, q \equiv 0 \pmod 2 \right\}
\end{align*}
be index 2-sublattices of $\mathbb Z^2$.
Let $p/q \in \mathbb Q^{(\alpha)}$.
A rational number $p/q$ is a best $(\alpha)$-rational approximation (respectively, best $(\alpha,\beta)$-rational approximation) of $x$ if and only if there is no primitive vector belongs to $\Lambda^{(\alpha)}$ (respectively, $\Lambda^{(\alpha)} \cup \Lambda^{(\beta)}$) in $P_{\mathscr B} (x,\mathbf v_{p/q})$ except for $\mathbf v_{p/q}$, $-\mathbf v_{p/q}$ and $\mathbf 0$.

\begin{lemma}\label{lattice}
Let $x \in \mathbb R \setminus \mathbb Q$. 
Let $\mathbf v \in \Lambda^{(\alpha)}$ be a nonzero primitive vector for some $\alpha \in \{ 0,1,\infty\}$.
\begin{enumerate}[label=(\roman*)]
\item If $P_{\mathscr B} (x,\mathbf v)$ contains no primitive vectors belonging to $\Lambda^{(\alpha)}$ except for $\mathbf v$, $-\mathbf v$, $\mathbf 0$, then $P_{\mathscr S} (x,\mathbf v)$ contains no vectors of $\mathbb Z^2$ except for $\mathbf v$, $\mathbf 0$.

\item 
Assume that $P_{\mathscr S} (x,\mathbf v)$ contains no vectors of $\mathbb Z^2$ except for $\mathbf v$ and $\mathbf 0$.
Then there exists no primitive vectors belonging to $\Lambda^{(\alpha)}$ in $P_{\mathscr B} (x,\mathbf v)$ except for $\mathbf v$, $-\mathbf v$, $\mathbf 0$.
Moreover, any two nonzero vectors $\mathbf u$, $\mathbf u'$ of $\mathbb Z^2$ in $P_{\mathscr B} (x,\mathbf v)$ besides of $\mathbf v$, $-\mathbf v$ satisfy that $\mathbf u = c\mathbf u'$ for some constant $c$.
\end{enumerate}
\end{lemma}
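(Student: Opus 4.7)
My plan is to work in the coordinates $(a, b)$ on the plane spanned by $\mathbf x = \mathbf x(x, \mathbf v)$ and $\mathbf y = \mathbf y(x, \mathbf v)$, so that a vector $\mathbf u = a \mathbf x + b \mathbf y$ lies in $P_{\mathscr B}(x, \mathbf v)$ iff $(a, b) \in [-1, 1]^2$ and in $P_{\mathscr S}(x, \mathbf v)$ iff $(a, b) \in [0, 1]^2$, with $\mathbf v = \mathbf x + \mathbf y$ sitting at $(1, 1)$. The arithmetic input I keep using is $2 \mathbb Z^2 \subset \Lambda^{(\alpha)}$, together with the observation that every primitive vector of $\Lambda^{(\alpha)}$ has a fixed parity pattern on its two entries: $(\text{even}, \text{odd})$, $(\text{odd}, \text{odd})$, or $(\text{odd}, \text{even})$ according as $\alpha = 0, 1, \infty$.

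Parts (i) and the first assertion of (ii) both reduce to a lattice-arithmetic trick. For (i) I argue by contrapositive: given a lattice point $\mathbf w \in P_{\mathscr S} \setminus \{\mathbf 0, \mathbf v\}$, set $\mathbf z = \mathbf v - 2 \mathbf w$. Its $(a, b)$-coordinates $(1 - 2 a_{\mathbf w}, 1 - 2 b_{\mathbf w}) \in [-1, 1]^2$ place $\mathbf z$ in $P_{\mathscr B}$, and $\mathbf z \in \Lambda^{(\alpha)}$ since $\mathbf v \in \Lambda^{(\alpha)}$ and $2 \mathbf w \in 2 \mathbb Z^2$. Writing $\mathbf z = d \mathbf u$ with $\mathbf u$ primitive, the fact that $\mathbf z$ inherits the odd coordinate of $\mathbf v$ forces $d = \gcd(z_1, z_2)$ to be odd, so $\mathbf u$ retains the parity pattern of $\mathbf v$ and lies in $\Lambda^{(\alpha)}$; the primitivity of $\mathbf v$ then rules out $\mathbf u \in \{\mathbf 0, \pm \mathbf v\}$, contradicting the hypothesis of (i). For the first assertion of (ii) I take a putative primitive $\mathbf u \in \Lambda^{(\alpha)} \cap P_{\mathscr B} \setminus \{\mathbf 0, \pm \mathbf v\}$ and consider the symmetric object $\mathbf w_+ = (\mathbf v + \mathbf u)/2$. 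Since $\mathbf v$ and $\mathbf u$ share the same parity pattern, $\mathbf v + \mathbf u$ has both coordinates even and $\mathbf w_+ \in \mathbb Z^2$; its $(a, b)$-coordinates $((1 + a_{\mathbf u})/2, (1 + b_{\mathbf u})/2) \in [0, 1]^2$ put it in $P_{\mathscr S}$, and $\mathbf u \ne \pm \mathbf v$ gives $\mathbf w_+ \notin \{\mathbf 0, \mathbf v\}$, contradicting the hypothesis of (ii).

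For the "moreover" clause I run a Euclidean descent, since the parity trick is no longer available. Any lattice point $\mathbf u \in P_{\mathscr B} \setminus \{\mathbf 0, \pm \mathbf v\}$ must satisfy $a b < 0$; otherwise $\mathbf u$ or $-\mathbf u$ lies in $P_{\mathscr S}$ and the hypothesis forces $\mathbf u \in \{\mathbf 0, \pm \mathbf v\}$. Suppose two such lattice points $\mathbf u, \mathbf u'$ are linearly independent; after possibly negating each, we may assume both lie in the open quadrant $Q = \{a > 0, b < 0\}$, and a check against the hypothesis shows that their $a$-coordinates must be distinct, say $a < a'$, and a similar check shows $b > b'$. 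Then $\mathbf u' - \mathbf u$ again lies in $Q \cap P_{\mathscr B}$, and a trichotomy on $a' - a$ against $a$ either yields $\mathbf u' - \mathbf u = \mathbf u$ (so $\mathbf u' = 2 \mathbf u$, a linear dependence) or replaces $(\mathbf u, \mathbf u')$ by a linearly independent pair in $Q$ with strictly smaller $\max\{a, a'\}$. Since $P_{\mathscr B} \cap \mathbb Z^2$ is finite, this descent must terminate in the degenerate case, producing the required linear dependence. The most delicate step is verifying at each iteration that $\mathbf u' - \mathbf u$ remains inside $Q \cap P_{\mathscr B}$, which repeatedly uses the assumption $P_{\mathscr S} \cap \mathbb Z^2 = \{\mathbf 0, \mathbf v\}$ to exclude intermediate points with non-negative both coordinates.
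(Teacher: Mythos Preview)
Your arguments for part~(i) and the first assertion of part~(ii) are essentially the paper's: the paper uses $2\mathbf u - \mathbf v$ where you use $\mathbf v - 2\mathbf w$ (just a sign), and the midpoint $\tfrac12(\mathbf v + \mathbf u)$ is exactly the paper's construction for~(ii).

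For the ``moreover'' clause your route genuinely differs. The paper first replaces $\mathbf u$ by one for which $P_{\mathscr B}(x,\mathbf u)\cap\mathbb Z^2=\{\mathbf 0,\pm\mathbf u\}$, then observes that the parallelogram on $\mathbf u,\mathbf v$ contains no interior lattice points, so $\{\mathbf u,\mathbf v\}$ is a $\mathbb Z^2$-basis; writing $\mathbf u'=s\mathbf v+t\mathbf u$ and using that $\mathbf u'\notin \pm P_{\mathscr S}(x,\mathbf v)$ forces $s=0$. Your descent avoids this reduction and the basis step entirely: you repeatedly subtract within the quadrant $Q$ and use finiteness of $P_{\mathscr B}\cap\mathbb Z^2$ to terminate. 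This is correct, because any two independent points of $Q\cap P_{\mathscr B}\cap\mathbb Z^2$ have both $a$- and $b$-coordinates in intervals of length~$1$, so the check ``equal $a$-coordinates $\Rightarrow$ difference lands in $\pm P_{\mathscr S}$'' applies not just to the initial pair but to every pair produced along the way; hence the trichotomy step goes through (in the equal case $a'-a=a$, the new pair has equal $a$-coordinates and the check forces dependence, yielding your $\mathbf u'=2\mathbf u$). The paper's version is shorter and exposes the basis structure; yours is more hands-on and needs no preliminary normalization of~$\mathbf u$, at the cost of tracking the inequalities through each iteration.
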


\begin{proof}
(i) Suppose that there exists a nonzero $\mathbf u  \in \mathbb Z^2$ in $P_{\mathscr S}(x,\mathbf v)$ with $\mathbf u \neq \mathbf v$.
Let 
$$
\mathbf w = \mathbf v - 2(\mathbf v -\mathbf u). 
$$
Then $\mathbf w \in \Lambda^{(\alpha)}$, $\mathbf w \ne \mathbf v$ and $\mathbf w \in P_{\mathscr B}(x,\mathbf v)$.
See Figure~\ref{Fig_proof} (left).
Let $c$ be the greatest common divisor of the two coordinates of $\mathbf w$. 
Since $\mathbf v$ is primitive, $c$ cannot be an even number. 
Thus $\frac 1c \mathbf w$ is primitive and $\frac 1c \mathbf w \in \Lambda^{(\alpha)}$.

(ii) 
Let ${\mathbf u}$ be a nonzero vector of $\mathbb Z^2$ in $P_{\mathscr B}(x,\mathbf v)$ which differs from $\mathbf v$, $-\mathbf v$.
By the assumption, $\mathbf u$ is not in $-P_{\mathscr S}(x,\mathbf v)$.
If ${\mathbf u} \in \Lambda^{(\alpha)}$, then 
$\mathbf w = \frac{1}{2}\left(\mathbf v + \mathbf u \right) \in \mathbb Z^2$ is in $P_{\mathscr S}(x,\mathbf v)$, which contradicts to the assumption.
See Figure~\ref{Fig_proof} (right).

Let $\mathbf u = a \mathbf x + b \mathbf y$ for some integers $a, b$ of opposite signs where $\mathbf x = \mathbf x(x,\mathbf v)$ and $\mathbf y = \mathbf y(x,\mathbf v)$.
We may assume that $P_{\mathscr B} (x,\mathbf u)$ contains no vectors of $\mathbb Z^2$ except for $\mathbf u$, $-\mathbf u$, $\mathbf 0$.
Then the parallelogram given by $\mathbf u$ and $\mathbf v$,
$$
P = \{ s\mathbf v + t \mathbf u \, | \, 0 \le s,t \le 1\} \subset P_{\mathscr S} (x,\mathbf v) \cup P_{\mathscr S} (x,\mathbf u)  \cup \left( P_{\mathscr S} (x,\mathbf v) + \mathbf u \right) \cup \left(  P_{\mathscr S} (x,\mathbf u) + \mathbf v \right),
$$
contains no integral vectors except for $\mathbf 0$, $\mathbf v$, $\mathbf u$, $\mathbf v + \mathbf u$. 
Since $\mathbf v$ and $\mathbf u$ generate $\mathbb Z^2$, there exist integers $s$, $t$ such that $\mathbf u' = s \mathbf v + t \mathbf u
= (s + a t) \mathbf x + (s + bt) \mathbf y$. 
By $\mathbf u' \notin P_{\mathscr S} (x,\mathbf v) \cup \left( - P_{\mathscr S} (x,\mathbf v) \right)$, we have  $-1 \le s + at < 0 < s + bt \le 1$ or $-1 \le s + bt < 0 < s + at \le 1$. 
Since $a, b$ are of opposite signs, we conclude that $s = 0$.
\end{proof}

\begin{figure}
\begin{tikzpicture}[scale=.6,inner sep=0mm]
	\draw[->] (-3.5, 0) -- (5, 0) node[right]{$x$};
	\draw[thick,red] (-.3, -.9) -- (2.3, 6.9);
	\foreach \position in {
	(0, 0), 
	(2,4), (-1,2),
	(5, 6)}
    \fill \position circle (0.15cm);
	\node (O) at (0, 0) {};
	\node (P) at (5, 6) {};
	\node (P1) at (-1, 6) {};
	\node (P2) at (2.7,-.9) {};
	\node (P3) at (-3.3,-.9) {};
	\node (P4) at (2,4) {};
	\node (P5) at (-1,2) {};
	
	\draw[thick, blue, ->] (O) -- (P) node[black, right, xshift=5pt]{$\mathbf v$};
	\draw[thick, blue, ->] (O) -- (P4) node[black, above, yshift = 3pt]{$\mathbf u$};
	\draw[thick, blue, ->] (O) -- (P5) node[black, above, yshift=3pt]{$\mathbf v -2(\mathbf v - \mathbf u)$};
	\draw[thin, dashed, blue] (P) -- (P1);
	\draw[thin, dashed, blue] (P) -- (P2);
	\draw[thin, dashed, blue] (P3) -- (P1);
\end{tikzpicture}
\qquad
\begin{tikzpicture}[scale=.6,inner sep=0mm]
	\draw[->] (-3.5, 0) -- (5, 0) node[right]{$x$};
	\draw[thick,red] (-.3, -.9) -- (2.3, 6.9);
	\foreach \position in {
	(0, 0), 
	(2,4), (-1,2),
	(5, 6)}
    \fill \position circle (0.15cm);
	\node (O) at (0, 0) {};
	\node (P) at (5, 6) {};
	\node (P1) at (-1, 6) {};
	\node (P2) at (2.7,-.9) {};
	\node (P3) at (-3.3,-.9) {};
	\node (P4) at (2,4) {};
	\node (P5) at (-1,2) {};
	
	\draw[thick, blue, ->] (O) -- (P) node[black, right, xshift=5pt]{$\mathbf v$};
	\draw[thick, blue, ->] (O) -- (P4) node[black, above, xshift=3pt,yshift = 3pt]{$\frac12(\mathbf v+\mathbf u)$};
	\draw[thick, blue, ->] (O) -- (P5) node[black, left, xshift=-3pt]{$\mathbf u$};
	\draw[thin, dashed, blue] (P) -- (P1);
	\draw[thin, dashed, blue] (P) -- (P2);
	\draw[thin, dashed, blue] (P3) -- (P1);
\end{tikzpicture}
\caption{Vectors of best approximations}
\label{Fig_proof}
\end{figure}
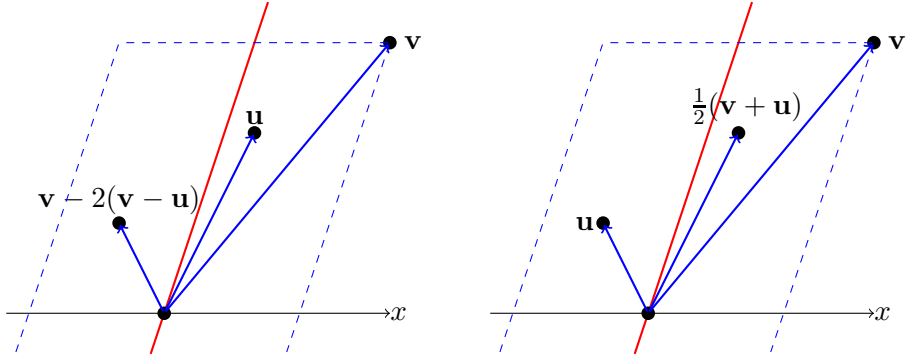

The following proposition is a direct consequence of Lemma~\ref{lattice}.
Note that $p/q \in \mathscr{S}_\alpha (x)$ if and only if $P_{\mathscr S} (x,\mathbf v_{p/q})$ contains no vectors of $\mathbb Z^2$ except for $\mathbf v_{p/q}$ and $\mathbf 0$, but $P_{\mathscr B} (x,\mathbf v_{p/q})$ contains a nonzero primitive vector of $\Lambda^{(\alpha)}$ except for $\mathbf v_{p/q}$.
By Lemma~\ref{lattice} (ii), we have $\mathscr{S}_\alpha(x)\cap \mathbb Q^{(\alpha)}=\emptyset$.

\begin{proposition}\label{Salpha}
Let $x \in \mathbb R \setminus \mathbb Q$.
For any $\alpha \in \{ 0, 1, \infty\}$, we have 
\begin{equation*}
\mathscr{B}^{(\alpha)}(x)\subset \mathscr{S}(x)\quad \text{ and } \quad
\mathscr{S}(x) \cap \mathbb Q^{(\alpha)} \subset \mathscr{B}^{(\alpha)}(x).
\end{equation*}
For $\alpha, \beta, \gamma$ with $\{ \alpha, \beta, \gamma \} = \{ 0, 1, \infty\}$, we have
\begin{equation}\label{eq:pr:Salpha-2}
\mathscr{S}_\alpha (x) \cap \mathscr{S}_\beta (x) = \emptyset, \qquad
\mathscr{S}_\alpha (x) \subset \mathscr{B}^{(\beta, \gamma)} (x), \qquad \text{and}\qquad
\mathscr{S}_\alpha(x) \cap \mathscr{B}^{(\alpha,\gamma)} (x) = \emptyset.
\end{equation}
\end{proposition}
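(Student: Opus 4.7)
The plan is to translate every assertion into the geometry of the parallelograms $P_{\mathscr S}$ and $P_{\mathscr B}$ developed just before the statement, so that each claim falls out of Lemma~\ref{lattice}. The dictionary I will use is: $p/q\in\mathscr{S}(x)$ iff $P_{\mathscr S}(x,\mathbf v_{p/q})$ contains no nonzero integer vector other than $\mathbf v_{p/q}$; and for $p/q\in\mathbb Q^{(\alpha)}$, membership in $\mathscr{B}^{(\alpha)}(x)$ (respectively in $\mathscr{B}^{(\beta,\gamma)}(x)$) is equivalent to $P_{\mathscr B}(x,\mathbf v_{p/q})$ containing no primitive vectors of $\Lambda^{(\alpha)}$ (respectively of $\Lambda^{(\beta)}\cup\Lambda^{(\gamma)}$) except $\pm\mathbf v_{p/q}$ and $\mathbf 0$.

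With this dictionary the first two inclusions are immediate: $\mathscr{B}^{(\alpha)}(x)\subset\mathscr{S}(x)$ is precisely Lemma~\ref{lattice}(i), and $\mathscr{S}(x)\cap\mathbb Q^{(\alpha)}\subset\mathscr{B}^{(\alpha)}(x)$ is Lemma~\ref{lattice}(ii) applied to the primitive vector $\mathbf v_{p/q}\in\Lambda^{(\alpha)}$.

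For \eqref{eq:pr:Salpha-2}, I would first record the arithmetic fact that any two distinct members of $\{\Lambda^{(0)},\Lambda^{(1)},\Lambda^{(\infty)}\}$ intersect in $2\mathbb Z^2$, so every primitive vector of $\mathbb Z^2$ lies in exactly one of them. Now fix $p/q\in\mathscr{S}_\alpha(x)$ and let $\mathbf u\in\Lambda^{(\alpha)}\setminus\{\pm\mathbf v_{p/q}\}$ be the primitive witness in $P_{\mathscr B}(x,\mathbf v_{p/q})$ supplied by Definition~\ref{de:S_alpha}. Since $\mathscr{S}_\alpha(x)\cap\mathbb Q^{(\alpha)}=\emptyset$, as remarked just before the statement, $\mathbf v_{p/q}$ belongs to some $\Lambda^{(\beta)}$ with $\beta\ne\alpha$, and Lemma~\ref{lattice}(ii) therefore applies with that $\beta$. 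Its ``moreover'' clause forces every nonzero integer vector in $P_{\mathscr B}$ other than $\pm\mathbf v_{p/q}$ to be a scalar multiple of $\mathbf u$. Consequently, a primitive vector of $\Lambda^{(\delta)}$ with $\delta\ne\alpha$ sitting in $P_{\mathscr B}\setminus\{\pm\mathbf v_{p/q}\}$ would have to equal $\pm\mathbf u\in\Lambda^{(\alpha)}\cap\Lambda^{(\delta)}\subset 2\mathbb Z^2$, contradicting primitivity. This single contradiction simultaneously yields $\mathscr{S}_\alpha(x)\cap\mathscr{S}_\beta(x)=\emptyset$ and $\mathscr{S}_\alpha(x)\subset\mathscr{B}^{(\beta,\gamma)}(x)$; meanwhile $\mathbf u\in\Lambda^{(\alpha)}$ is itself a forbidden primitive vector for any best $(\alpha,\gamma)$-approximation, giving $\mathscr{S}_\alpha(x)\cap\mathscr{B}^{(\alpha,\gamma)}(x)=\emptyset$.

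The only subtle point I anticipate is bookkeeping: one must invoke Lemma~\ref{lattice}(ii) at the index $\beta$ determined by $\mathbf v_{p/q}$ rather than at $\alpha$, in order to unlock the collinearity conclusion. Once that step is correctly set up, the parity computation $\Lambda^{(\alpha)}\cap\Lambda^{(\delta)}\subset 2\mathbb Z^2$ handles all remaining casework in a single stroke.
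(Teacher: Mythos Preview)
Your proposal is correct and follows essentially the same approach as the paper: the paper states that the proposition is a direct consequence of Lemma~\ref{lattice} (together with the observation $\mathscr{S}_\alpha(x)\cap\mathbb Q^{(\alpha)}=\emptyset$ made just before the statement), and you have accurately filled in those details via the parallelogram dictionary, the collinearity in the ``moreover'' clause of Lemma~\ref{lattice}(ii), and the parity fact $\Lambda^{(\alpha)}\cap\Lambda^{(\delta)}=2\mathbb Z^2$ for $\alpha\ne\delta$. One small remark: the ``moreover'' clause of Lemma~\ref{lattice}(ii) does not actually depend on which index the vector $\mathbf v$ belongs to, so your caveat about applying the lemma at the index $\beta$ of $\mathbf v_{p/q}$ is harmless but not strictly necessary for the collinearity step.
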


Using Proposition~\ref{Salpha}, we prove Theorem~\ref{thm1} as follows.

\begin{proof}[Proof of Theorem~\ref{thm1}]
The first statement of Proposition~\ref{Salpha} implies \ref{it:thm1-1}.

From \eqref{def_cor1}, we have
$\mathscr{B} (x) \cap \mathbb{Q}^{(\alpha,\beta)} \subset \mathscr{B}^{(\alpha,\beta)} (x)$
and from \eqref{eq:pr:Salpha-2}, we have
$\mathscr{S}_\gamma (x) \subset \mathscr{B}^{(\alpha,\beta)} (x).$
Therefore, we deduce that 
$$
\left( \mathscr{B} (x) \cap \mathbb{Q}^{(\alpha,\beta)} \right) \sqcup  \mathscr{S}_\gamma(x) \subset \mathscr{B}^{(\alpha,\beta)} (x).
$$
For the other direction, by \eqref{def_cor2}, \eqref{mmm} and \ref{it:thm1-1}, we have
\begin{align*}
\mathscr{B}^{(\alpha,\beta)} (x) 
&\subset \mathscr{B}^{(\alpha)} (x) \sqcup \mathscr{B}^{(\beta)} (x) 
= \mathscr{S}(x) \cap \mathbb Q^{(\alpha,\beta)} \\
&= \left( \mathscr{B}(x) \cap \mathbb Q^{(\alpha,\beta)}  \right) \sqcup \left( (\mathscr{S}(x) \setminus \mathscr{B}(x) ) \cap \mathbb Q^{(\alpha,\beta)}  \right)\\
&= \left( \mathscr{B}(x) \cap \mathbb Q^{(\alpha,\beta)}  \right) \sqcup \left( \left( \mathscr{S}_\alpha(x) \cup
\mathscr{S}_\beta(x) \cup
\mathscr{S}_\gamma(x) \right) \cap \mathbb Q^{(\alpha,\beta)}  \right).
\end{align*}
By \eqref{eq:pr:Salpha-2}, we have 
$$ \left( \mathscr{S}_\alpha(x) \cup
\mathscr{S}_\beta(x) \cup \mathscr{S}_\gamma(x) \right) \cap \mathscr{B}^{(\alpha,\beta)} = \mathscr{S}_\gamma(x).$$
Therefore, we conclude that 
\begin{equation*}
\mathscr{B}^{(\alpha,\beta)} (x) 
\subset \left( \mathscr{B}(x) \cap \mathbb Q^{(\alpha,\beta)} \right) \sqcup \mathscr{S}_\gamma(x).  \qedhere
\end{equation*}
\end{proof}

\section{Best signed approximations and intermediate convergents}
\label{sec:CF}

In this section, we will show that best signed approximations are intermediate convergents. 
An \emph{intermediate convergent} (or semi-convergent or median convergent) is defined by
$$\frac{p_{n,k}}{q_{n,k}} = \frac{kp_{n-1}+p_{n-2}}{kq_{n-1}+q_{n-2}} \quad  \text{ for } \  1\leq  k < a_n, \ n\ge 1.$$
The following lemma states a condition for a rational being an intermediate convergent. 

\begin{lemma}\label{lem1}
Let $x \in\mathbb R \setminus \mathbb Q$.
If $\frac ab$ satisfies that
\begin{equation}\label{condition}
b < q_n\ \text{ and } \ 
| bx - a | < | q_{n-2} x - p_{n-2} | \qquad \text{ for } n \ge 1,
\end{equation}
then $\frac ab  = \frac{p_{n-1}}{q_{n-1}}$ or 
$\frac{p_{n,k}}{q_{n,k}}$ for some $1 \le k < a_n$.
\end{lemma}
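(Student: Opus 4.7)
The plan is to exploit the fact that $(p_{n-1},q_{n-1})$ and $(p_{n-2},q_{n-2})$ form a $\mathbb Z$-basis of $\mathbb Z^2$, since $p_{n-1}q_{n-2}-p_{n-2}q_{n-1}=\pm 1$. I would write uniquely
\[
(a,b)=u(p_{n-1},q_{n-1})+v(p_{n-2},q_{n-2}), \qquad u,v\in\mathbb Z,
\]
so that $bx-a=u\varepsilon_{n-1}+v\varepsilon_{n-2}$ with $\varepsilon_k:=q_kx-p_k$. The standard ingredients I would rely on are that $\varepsilon_{n-1}$ and $\varepsilon_{n-2}$ have opposite signs, that $|\varepsilon_k|$ is strictly decreasing in $k$, and that $|\varepsilon_{n-2}|=a_n|\varepsilon_{n-1}|+|\varepsilon_n|$, which gives the key ratio bound $a_n<|\varepsilon_{n-2}|/|\varepsilon_{n-1}|<a_n+1$.

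Assuming $b>0$, I would run a case analysis on the signs of $u$ and $v$. If $u$ and $v$ have opposite nonzero signs, then $u\varepsilon_{n-1}$ and $v\varepsilon_{n-2}$ share the same sign, so $|bx-a|=|u||\varepsilon_{n-1}|+|v||\varepsilon_{n-2}|\ge|\varepsilon_{n-2}|$, contradicting \eqref{condition}. Similarly, $u=0$ with $v\ne 0$ gives $|bx-a|=|v||\varepsilon_{n-2}|\ge|\varepsilon_{n-2}|$, another contradiction (the degenerate case $n=1$ is handled by noting $q_{-1}=0$ would force $b=0$). The possibility $u,v<0$ is excluded by $b>0$, so the survivors are $v=0$ with $u>0$, which immediately yields $a/b=p_{n-1}/q_{n-1}$, and $u,v>0$.

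In the case $u,v>0$, the two summands in $bx-a$ have opposite signs. I would combine the approximation bound $|bx-a|<|\varepsilon_{n-2}|$, which rewrites as $(v-1)|\varepsilon_{n-2}|<u|\varepsilon_{n-1}|<(v+1)|\varepsilon_{n-2}|$, with the size constraint $b=uq_{n-1}+vq_{n-2}<q_n=a_nq_{n-1}+q_{n-2}$. Using the ratio bound $|\varepsilon_{n-2}|/|\varepsilon_{n-1}|>a_n$, the first inequality yields $u>(v-1)a_n$; the second yields $u<a_n-(v-1)q_{n-2}/q_{n-1}\le a_n$ whenever $v\ge 1$. Together these force $v=1$ and $u\in\{1,\dots,a_n-1\}$, so $a/b=p_{n,u}/q_{n,u}$.

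The main obstacle is this last bookkeeping step, where the interplay between the arithmetic constraint $b<q_n$ and the approximation-quality constraint $|bx-a|<|\varepsilon_{n-2}|$ has to squeeze $v$ all the way down to $1$; the decisive quantitative input is the lower bound $|\varepsilon_{n-2}|/|\varepsilon_{n-1}|>a_n$ coming from the one-step continued-fraction recurrence. Everything else is sign juggling of a vector expressed in the basis given by two consecutive principal convergents.
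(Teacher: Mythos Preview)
Your argument is correct, and it is a genuinely different (and in some ways cleaner) route than the paper's.

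The paper first invokes the best-approximation property of $p_{n-1}/q_{n-1}$ to conclude $b\ge q_{n-1}$ and $|q_{n-1}x-p_{n-1}|\le |bx-a|$, then, after fixing the sign of $\varepsilon_{n-1}$, repeatedly subtracts $(p_{n-1},q_{n-1})$ from $(a,b)$ (and once $(p_{n-2},q_{n-2})$ in the opposite-sign case) until the remainder is forced to vanish; this iterative descent is what pins down $a/b$ as $p_{n-1}/q_{n-1}$ or an intermediate convergent. You instead expand $(a,b)$ directly in the unimodular basis $\{(p_{n-1},q_{n-1}),(p_{n-2},q_{n-2})\}$ and kill all unwanted $(u,v)$ by a sign analysis together with the single quantitative input $a_n<|\varepsilon_{n-2}|/|\varepsilon_{n-1}|<a_n+1$. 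Your approach avoids appealing to Lagrange's theorem as a black box and replaces the inductive subtraction by a one-shot linear-algebra computation; the paper's argument is more geometric but leans on prior theory. One cosmetic point: your case list skips $u<0,\,v=0$ explicitly, but that is immediately ruled out by $b=uq_{n-1}<0$, so no harm done.
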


\begin{proof}
Since $p_{n-1}/q_{n-1}$ is the only best approximation of $x$  with a denominator between $q_{n-2}$ and $q_n$, 
Condition \eqref{condition} implies that $b \ge q_{n-1}$ and $|q_{n-1} x - p_{n-1}| \le | bx- a |$.

We may assume that $n$ is odd, i.e., $q_{n-1} x - p_{n-1} > 0$ since the other case is symmetric.

\begin{enumerate}[label=(\roman*)]
\item If $0 < b x - a $, then 
$0 < q_{n-1} x - p_{n-1} \le bx- a$.
Therefore, $0 \le (b-q_{n-1}) x - (a - p_{n-1}) < b x- a$,
which implies either 
$b = q_{n-1}$ or $0 < (b-q_{n-1}) x - (a - p_{n-1}) < b x- a$.
The latter case implies $q_{n-1} \le b-q_{n-1}<q_n$ by \eqref{condition}, thus
$b-q_{n-1} = q_{n-1}$ or $0 < (b-2q_{n-1}) x- (a - 2p_{n-1}) < b x- a$. 
We repeat this procedure until $b = c q_{n-1}$ and $a = c p_{n-1}$ for some $c \ge 1$.

\item If $bx- a <0$, then we have
$q_{n-2} x - p_{n-2} < bx- a <0$.
Therefore, $ (b-q_{n-2}) x - (a-p_{n-2}) >0$. 
By (i), we have $b = c q_{n-1} + q_{n-2}$ for some $c$ with $1 \le c < a_n$. \qedhere
\end{enumerate}
\end{proof}

In the following proposition, we show that best signed approximations are principal and intermediate convergents. 
See Figure~\ref{Fig_intermediate}.

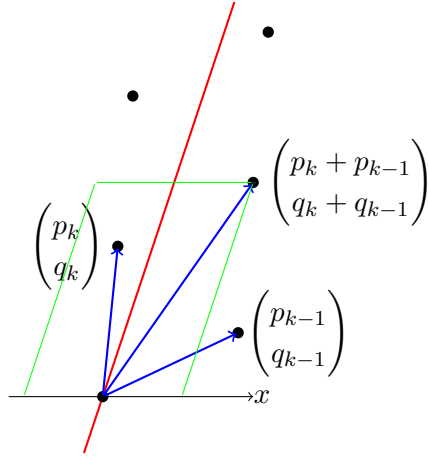
\begin{figure}
\begin{tikzpicture}[scale=0.5,inner sep=0mm]
	\draw[->] (-2.5, 0) -- (4, 0) node[right]{$x$};
	\draw[thick,red] (-.5, -1.5) -- (3.5, 10.5);
	\foreach \position in {
	(0, 0), (.4,4), (.8,8),
	(3.6, 1.7), (4,5.7), (4.4,9.7) }
    \fill \position circle (0.15cm);
	\node (O) at (0, 0) {};
	\node (P) at (3.6, 1.7) {};
	\node (P1) at (-.2, 5.7) {};
	\node (P2) at (2.1,0) {};
	\node (P3) at (-2.1,0) {};
	\node (P4) at (.4,4) {};
	\node (P5) at (4,5.7) {};
	\draw[thick, blue, ->] (O) -- (P) node[black, right, xshift=3pt]{$\begin{pmatrix} p_{k-1} \\ q_{k-1} \end{pmatrix}$};
	\draw[thick, blue, ->] (O) -- (P4) node[black, left, xshift=-5pt]{$\begin{pmatrix} p_k \\ q_k \end{pmatrix}$};
	\draw[thick, blue, ->] (O) -- (P5) node[black, right, xshift=5pt]{$\begin{pmatrix} p_k + p_{k-1} \\ q_k + q_{k-1} \end{pmatrix}$};
	\draw[thin, green] (P5) -- (P1);
	\draw[thin, green] (P5) -- (P2);
	\draw[thin, green] (P1) -- (P3);
\end{tikzpicture}
\caption{Intermediate convergents and best signed approximations of $x$.}
\label{Fig_intermediate}
\end{figure}

\begin{proposition}\label{prop_int}
A rational $p/q$ is a best signed approximation of $x\in\mathbb{R}\setminus\mathbb{Q}$ if and only if $p/q$ is a principal convergent or an intermediate convergent of $x$, i.e.,
$$
\mathscr{S}(x) = 
\left\{ \frac{p_{n,k}}{q_{n,k}} \, | \,   1 \le k < a_n, \ n \ge 1 \right\} \cup \left\{ \frac{p_n}{q_n} \, | \, n \ge 0 \right\}.
$$
\end{proposition}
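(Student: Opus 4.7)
The proof splits into the two set inclusions, and I would use Lemma~\ref{lem1} for one and the parallelogram criterion introduced before Lemma~\ref{lattice} for the other.

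\emph{First, $\mathscr{S}(x)$ is contained in the set of principal and intermediate convergents.} Take $p/q \in \mathscr{S}(x)$ that is not a principal convergent, and pick the unique $n \ge 1$ with $q_{n-1} \le q < q_n$. Setting $\eta_j = q_j x - p_j$, these errors alternate in sign. I split on $\mathrm{sgn}(qx - p)$. In the case $\mathrm{sgn}(qx-p) = \mathrm{sgn}(\eta_{n-2})$ (for $n = 1$ one uses the automatic bound $|qx-p| < 1 = |\eta_{-1}|$ holding for any best signed approximation), the signed-approximation inequality gives $|qx-p| < |\eta_{n-2}|$ because $p_{n-2}/q_{n-2} \ne p/q$ and $q_{n-2} < q$; Lemma~\ref{lem1} then confines $p/q$ to $\{p_{n-1}/q_{n-1}\} \cup \{p_{n,k}/q_{n,k} : 1 \le k < a_n\}$, and since $\mathrm{sgn}(\eta_{n-1}) \ne \mathrm{sgn}(\eta_{n-2})$ only an intermediate convergent remains. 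In the case $\mathrm{sgn}(qx-p) = \mathrm{sgn}(\eta_{n-1})$, the subcase $q = q_{n-1}$ collapses to $p/q = p_{n-1}/q_{n-1}$ by the best-approximation property of $p_{n-1}/q_{n-1}$ (contradicting the assumption), and $q > q_{n-1}$ gives $|qx-p| < |\eta_{n-1}| < |\eta_{n-2}|$, after which Lemma~\ref{lem1} and the sign constraint force $p/q = p_{n-1}/q_{n-1}$, again contradicting the assumption.

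\emph{For the reverse inclusion,} principal convergents $p_n/q_n$ with $n \ge 1$ are in $\mathscr{B}(x) \subseteq \mathscr{S}(x)$, and $p_0/q_0 = a_0$ is handled by a direct check on $qx - p = x - a_0 \in (0,1)$. For an intermediate convergent $p_{n,k}/q_{n,k}$ with $1 \le k < a_n$, I would verify the parallelogram criterion: no lattice point other than $\mathbf 0$ and $\mathbf v_{n,k} = (p_{n,k}, q_{n,k})$ lies in $P_{\mathscr S}(x, \mathbf v_{n,k})$. Writing any candidate in the $\mathbb Z$-basis $\{\mathbf v_{n-1}, \mathbf v_{n-2}\}$ as $\mathbf w = \alpha \mathbf v_{n-1} + \beta \mathbf v_{n-2}$ and setting $r = |\eta_{n-1}|/|\eta_{n-2}| \in (1/(a_n+1), 1/a_n)$, the three defining conditions of $P_{\mathscr S}$ become $\beta - \alpha r \in (0, 1 - kr]$ together with $0 \le \alpha q_{n-1} + \beta q_{n-2} \le q_{n,k}$. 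A short case-split on $\beta \in \{0, 1\}$ and $|\beta| \ge 2$, using $k < a_n$, leaves $(\alpha, \beta) = (k, 1)$ as the unique admissible pair, giving $\mathbf w = \mathbf v_{n,k}$.

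The main obstacle will be the sign bookkeeping in the first inclusion: the comparison convergent must be chosen so that its error has the same sign as $qx - p$ in order to invoke the signed-approximation inequality. The second inclusion is elementary but finicky, and the strict inequality $k < a_n$ is essential for the case-split to close.
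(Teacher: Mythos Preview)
Your proposal is correct. The first inclusion follows the paper's approach: both invoke Lemma~\ref{lem1} after bounding $|qx-p|$ by $|q_{n-2}x-p_{n-2}|$, the only difference being that the paper handles the sign bookkeeping with a single ``WLOG $n$ odd'' while you do the explicit case split on $\mathrm{sgn}(qx-p)$.

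For the reverse inclusion on intermediate convergents, however, you take a genuinely different route. The paper reuses Lemma~\ref{lem1}: it observes that any primitive lattice point in $P_{\mathscr B}(x,\mathbf v_{n,k})$ other than $\pm\mathbf v_{n,k}$ must satisfy the hypotheses of that lemma (since $|q_{n,k}x-p_{n,k}|<|q_{n-2}x-p_{n-2}|$ and $q_{n,k}<q_n$), hence is one of $p_{n-1}/q_{n-1}$ or $p_{n,i}/q_{n,i}$ with $i<k$; the latter are excluded by the size of the error, leaving only $p_{n-1}/q_{n-1}$, whose error has the opposite sign and so lies outside $P_{\mathscr S}$. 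Your argument instead expands an arbitrary lattice point in the $\mathbb Z$-basis $\{\mathbf v_{n-1},\mathbf v_{n-2}\}$ and does a direct case analysis on the coefficient $\beta$. Both arguments are short; the paper's is more economical because it recycles Lemma~\ref{lem1} rather than introducing a separate computation, while yours is more self-contained and makes the role of the inequality $k<a_n$ (equivalently $r>1/(a_n+1)$ is not even needed; only $r<1/a_n$ matters) explicit.
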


\begin{proof}
Let $p/q$ be a best signed approximation of $x$.
Choose $n \ge 1$ such that $q_{n-1} \le q < q_n$.
We may assume that $n$ is odd, i.e., $q_{n-1} x - p_{n-1} > 0$ since the other case is symmetric. 
Then
$$
q_{n-2} x- p_{n-2} < qx - p <0 
\ \text{ or } \ 0 < qx - p \le q_{n-1} x- p_{n-1}.
$$
By Lemma~\ref{lem1}, we deduce that $p/q = p_{n-1}/q_{n-1}$ or $p_{n,k}/q_{n,k}$.

If $\frac pq = \frac{p_0}{q_0} = a_0$, then it is straightforward to check $p/q \in \mathscr{S}(x)$.
For $p/q = p_n/q_n$ for $n\ge 1$, by \eqref{eq:P=B}, we have $p/q \in \mathscr{B}(x) \subset \mathscr{S}(x)$.

Let $p/q = p_{n,k}/q_{n,k}$ for $n\ge 1$, $1 \le k < a_n$ and suppose that for some rational number $a/b \in \mathbb Q$, $\mathbf v_{a/b} \in P_{\mathscr B} (x,\mathbf v_{p/q}) \setminus\{\mathbf v_{p/q},-\mathbf v_{p/q}\}$.
Since $|q_{n-2} x-p_{n-2}|>|q_{n,k} x-p_{n,k}|$, by Lemma~\ref{lem1}, we have $\frac ab = \frac{p_{n-1}}{q_{n-1}}$ or $\frac ab = \frac{p_{n,i}}{q_{n,i}}$ for some $1 \le i < k$.
However, $|q_{n,i} x-p_{n,i}| > |q_{n,k} x-p_{n,k}|$ for any $ 1 \le i <  k$,
we deduce that $\frac ab = \frac{p_{n-1}}{q_{n-1}}$.
Since $q_{n-1}x - p_{n-1}$ and $q_{n,k} x-p_{n,k}$ have different signs, we conclude that
\begin{equation*}
\frac pq = \frac{p_{n,k}}{q_{n,k}} \in \mathscr{S}(x).  \qedhere
\end{equation*}
\end{proof}

By Proposition~\ref{prop_int} and \eqref{eq:P=B},
the element of $\mathscr{S}(x) \setminus \mathscr{B}(x)$ are either intermediate convergents of $x$, or $a_0$ if $a_1 = 1$. 
In the definition of $\mathscr{S}_\alpha(x)$ (see Definition~\ref{de:S_alpha}), $a/b\in\mathbb Q^{(\alpha)}$ should be the best approximation whose denominator is the biggest integer less than or equal to $q$.
In other words,
for $1 \le k < a_n$ and $n \ge 1$, we have
$$
\frac{p_{n,k}}{q_{n,k}} \in \mathscr S_\alpha(x) \qquad \text{ if } \quad \frac{p_{n-1}}{q_{n-1}} \in \mathbb Q^{(\alpha)}. $$
If $a_1 = 1$, then 
$$\frac{p_0}{q_0} = a_0 \in \mathscr{S}_{\alpha}(x) 
\quad \text{ if } \quad  \frac{p_1}{q_1} = a_0 + 1 \in \mathbb Q^{(\alpha)}.$$

Let $\{\alpha, \beta, \gamma\}=\{0,1,\infty\}$.
Combining Theorem~\ref{thm1} with Proposition~\ref{prop_int}, we describe conditions of each intermediate convergent belonging $\mathscr{B}^{(\alpha)}(x)$, or $\mathscr{B}^{(\beta,\gamma)}$.
For $\frac{p_{n-1}}{q_{n-1}} \in \mathbb Q^{(\alpha)}$ and $\frac{p_{n-2}}{q_{n-2}} \in \mathbb Q^{(\beta)}$,  by Theorem~\ref{thm1}, we have for $1 \le k < a_n$
$$
\frac{p_{n,k}}{q_{n,k}} \in \mathscr B^{(\beta,\gamma)}(x), \qquad
\frac{p_{n,k}}{q_{n,k}} \notin \mathscr B^{(\alpha, \beta)}(x), \qquad
\frac{p_{n,k}}{q_{n,k}} \notin \mathscr B^{(\alpha,\gamma)}(x),
$$
\begin{equation*}
\frac{p_{n,k}}{q_{n,k}} \in \mathscr B^{(\gamma)}(x) \ \text{ for odd $k$ \ and }  \
\frac{p_{n,k}}{q_{n,k}} \in \mathscr B^{(\beta)}(x) \ \text{ for even $k$}.
\end{equation*}
Also, if $a_1 = 1$ and $\frac{p_0}{q_0} = a_0 \in \mathbb Q^{(\alpha)}$ for $\alpha = 0$ or $\alpha = 1$, then 
$$
\frac{p_{0}}{q_{0}} \in \mathscr B^{(\alpha)}(x), \qquad
\frac{p_{0}}{q_{0}} \in \mathscr B^{(\alpha,\infty)}(x), \qquad
\frac{p_{0}}{q_{0}} \notin \mathscr B^{(0,1)}(x), \qquad
\frac{p_{0}}{q_{0}} \notin \mathscr B^{(1-\alpha,\infty)}(x).
$$

\section{A continued fraction algorithm preserving mod 2 parity}\label{sec:Delta}

In this section, we recall the coding of the geodesics on the modular surface with the regular continued fractions. 
Then, we introduce a new boundary expansion of the hyperbolic plane by using a triangle group associated with the ideal triangle whose vertices are $0$, $1$ and $\infty$.
We will show that the new expressions detect best signed approximations.

\subsection{Coding of a geodesic by the Farey Tessellation}\label{sec:Series}

The connection between continued fraction algorithm and the geodesic on the hyperbolic plane $\mathbb H = \{ z = x+yi\in \mathbb C \, | \, y > 0\}$ has been well studied.

Let $\mathcal F$ be the \emph{Farey tessellation} given by the collection of the $\mathrm{PSL}_2(\mathbb Z)$-images of the ideal triangle $\mathbf T$ with vertices at $0$, $1$, $\infty$.
Consider a hyperbolic geodesic $g$ (semicircular arc) from $y < 0$ to $x >0$.
The geodesic $g$ cuts a succession of tiles of the tessellation $\mathcal F$ starting from $\mathbf T$.
Each tile has two cutting sides by the geodesic $g$.
These cutting two sides meet in a vertex to the left, or to the right, in the oriented geodesic $g$. 
Labeling these segments of $g$ by $L$ or $R$ accordingly,
we have a sequence of $(M_i)_{i=1}^\infty$ such that $M_i \in \{ L , R \}$.
The resulting sequence $L^{a_0}R^{a_1}L^{a_2} \cdots$, 
$a_i \in \mathbb N$ for $i\ge 0$, is called the \emph{cutting sequence} of $x$.
See Figure~\ref{Fig_Series} for the Farey tessellation $\mathcal F$ and the cutting sequence along a geodesic.
Let $x > 1$ have cutting sequence $L^{a_0} R^{a_1} L^{a_2} \cdots $ with $a_i \in \mathbb N$. 
Then $x= [a_0;a_1,a_2, \dots ]$. Likewise if $0 < x < 1$ has cutting sequence $R^{a_1} L^{a_2} \cdots $ with $a_i \in \mathbb N$, then $x= [0;a_1,a_2, \dots ]$.
See \cites{Ser85, Ser91} for the details.

\begin{figure}
\begin{tikzpicture}[scale=3.8]
	\node[below] at (-1, 0) {$\frac {-1}1$};
	\node[below] at (-2/3, 0) {$\frac {-2}3$};
	\node[below] at (-1/2, 0) {$\frac {-1}2$};
	\node[below] at (-1/3, 0) {$\frac {-1}3$};
	\node[below] at (0, 0) {$\frac 01$};
	\node[below] at (1/3, 0) {$\frac 13$};
	\node[below] at (1/2, 0) {$\frac 12$};
	\node[below] at (2/3, 0) {$\frac 23$};
	\node[below] at (1, 0) {$\frac 11$};
	\node[below] at (4/3, 0) {$\frac 43$};
	\node[below] at (3/2, 0) {$\frac 32$};
	\node[below] at (5/3, 0) {$\frac 53$};
	\node[below] at (2, 0) {$\frac 21$};
	\node[below] at (7/3, 0) {$\frac 73$};
	\node[below] at (5/2, 0) {$\frac 52$};
	\node[below] at (8/3, 0) {$\frac 83$};
	\node[below] at (3, 0) {$\frac 31$};

	\node at (.5, .8) {$\mathbf T$};
	\node at (.93, 1.1) {$L$};
	\node at (1.93, .85) {$L$};
	\node at (2.33, .55) {$R$};
	\node at (2.4, .3) {$R$};
	\node at (2.32, .17) {$L$};
	
	\draw (-1, 0) -- (3, 0);
	\draw (-1,1.25) -- (-1,0);
	\draw (0,1.25) -- (0,0);
	\draw (1,1.25) -- (1,0);
	\draw (2,1.25) -- (2,0);
	\draw (3,1.25) -- (3,0);

    \draw (-1,0) arc (180:0:1/2);
    \draw (-1,0) arc (180:0:1/4);
    \draw (-1,0) arc (180:0:1/6);

    \draw (0,0) arc (0:180:1/4);
    \draw (0,0) arc (180:0:1/4);
    \draw (0,0) arc (0:180:1/6);
    \draw (0,0) arc (180:0:1/6);

    \draw (1,0) arc (0:180:1/2);
    \draw (1,0) arc (180:0:1/2);
    \draw (1,0) arc (0:180:1/4);
    \draw (1,0) arc (180:0:1/4);
    \draw (1,0) arc (0:180:1/6);
    \draw (1,0) arc (180:0:1/6);

    \draw (2,0) arc (0:180:1/4);
    \draw (2,0) arc (180:0:1/4);
    \draw (2,0) arc (0:180:1/6);
    \draw (2,0) arc (180:0:1/6);

    \draw (3,0) arc (0:180:1/2);
    \draw (3,0) arc (0:180:1/4);
    \draw (3,0) arc (0:180:1/6);
    
    \draw (-1/2,0) arc (0:180:1/12);
    \draw (-1/2,0) arc (180:0:1/12);
    \draw (1/2,0) arc (0:180:1/12);
    \draw (1/2,0) arc (180:0:1/12);
    \draw (3/2,0) arc (0:180:1/12);
    \draw (3/2,0) arc (180:0:1/12);
    \draw (5/2,0) arc (0:180:1/12);
    \draw (5/2,0) arc (180:0:1/12);

    \draw[thick,red] (2.39,0) node[below] {$x$} arc (0:180:1.22);
    
\end{tikzpicture}
\caption{The Farey tessellation $\mathcal F$ and the cutting sequence along the geodesic.}
\label{Fig_Series}
\end{figure}

We correspond $L$ and $R$ to the following matrices
$$
L = \begin{pmatrix} 1 & 1 \\ 0 & 1 \end{pmatrix} \quad \text{ and } \quad
R = \begin{pmatrix} 1 & 0 \\ 1 & 1 \end{pmatrix}.
$$
Then the sequence of matrices $M_n$ indicates the cutting sequence of the ideal triangles along the geodesic to $x$ in the Farey tessellation $\mathcal F$.
Starting with $\mathbf T$, the $m$-th ideal triangle intersecting the geodesic $g$ on the Farey tessellation $\mathcal F$ is given by $M_1 \cdots M_m \cdot \mathbf T$.

We allow $a_0$ to be a negative integer so that $x$ can be any irrational number in $\mathbb R$.
For a given $x \in \mathbb R \setminus \mathbb Q$, we have a sequence $(M_i)_{i=1}^\infty$ given by the cutting sequence $L^{a_0} R^{a_1} L^{a_2} \cdots$
with $a_0 \in \mathbb Z$ and $a_i \in \mathbb N$ for $i \ge 1$.
If $1 \le m \le |a_0|$, then
\begin{equation}\label{Farey1}
M_1 \cdots M_m 
= \begin{cases} L^{m} = \begin{pmatrix} 1 & m \\ 0 & 1 \end{pmatrix}, &\text{ if } a_0 > 0, \\
L^{-m} = \begin{pmatrix} 1 & -m \\ 0 & 1 \end{pmatrix}, &\text{ if } a_0 < 0.
\end{cases}
\end{equation}
For $m = |a_0| + a_1 + \dots + a_n + k$ with $n \ge 0$ and $0 \le k < a_{n+1}$, we have 
\begin{equation}\label{Farey2}
M_1 \cdots M_m 
= \begin{cases}
L^{a_0} R^{a_1} \cdots L^{a_n} R^{k} = \begin{pmatrix} kp_n + p_{n-1} & p_n \\ kq_n + q_{n-1} & q_n \end{pmatrix} & \text{ if $n$ is even}, \\
L^{a_0} R^{a_1} \cdots R^{a_n} L^{k}
 = \begin{pmatrix} p_n & kp_n + p_{n-1} \\ q_n & kq_n + q_{n-1} \end{pmatrix} & \text{ if $n$ is odd}. 
\end{cases}
\end{equation}

\subsection{A new expansion of real numbers by three reflections}
In this subsection, we define a new expansion of the extended real line $\mathbb{R} \cup \{ \infty\}$ which is identified as the boundary of the upper half-plane $\mathbb H$.

We consider the matrices of 
$\mathrm{PGL}_2(\mathbb Z)$ acting on $\mathbb H$ as
$$
\begin{pmatrix} a & b \\ c & d \end{pmatrix} 
\cdot z = \begin{cases}
\frac{az+b}{cz+d}, &\text{ if } ad-bc = 1, \\
\frac{a\bar z+b}{c\bar z+d}, &\text{ if }  ad-bc = -1.
\end{cases}
$$
In the calculation of the matrices, sometimes we ignore the negative sign since $A$ and $-A$ are identified in $A\in\mathrm{PGL}_2(\mathbb Z)$.
Let $\Delta$ be the triangle group generated by the three reflections $H_0, H_1, H_\infty$, where 
$$
H_0 := \begin{pmatrix} -1 & 2 \\ 0 & 1 \end{pmatrix}, \quad
H_1 := \begin{pmatrix} -1 & 0 \\ 0 & 1 \end{pmatrix} \quad \text{ and } \quad
H_\infty := \begin{pmatrix} 1 & 0 \\ 2 & -1 \end{pmatrix}.
$$
The corresponding subintervals of $H_0$, $H_1$ and $H_\infty$ on $\partial \mathbb H$ are defined by
$$
I_0:=[1,\infty], \quad I_1 := [-\infty,0] \quad \text{ and }\quad I_\infty:=[0,1].
$$
Here, we identify $\infty$ and $-\infty$.
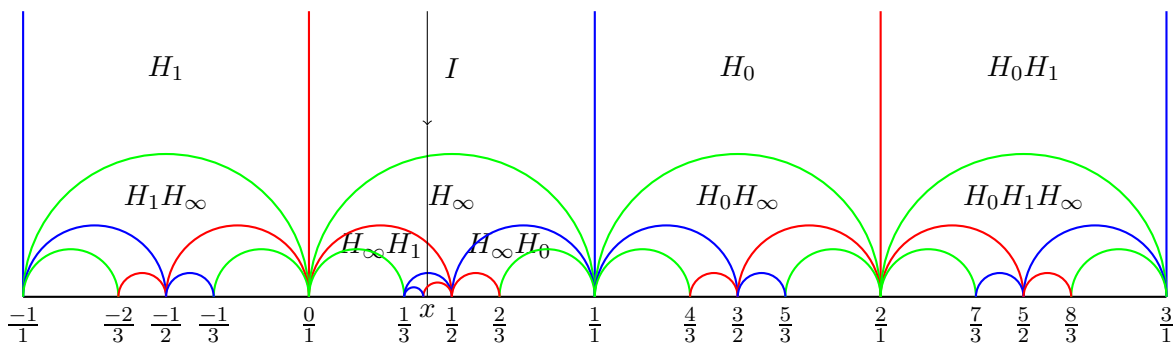
\begin{figure}
\begin{tikzpicture}[scale=3.8]
	\node[below] at (-1, 0) {$\frac {-1}1$};
	\node[below] at (-2/3, 0) {$\frac {-2}3$};
	\node[below] at (-1/2, 0) {$\frac {-1}2$};
	\node[below] at (-1/3, 0) {$\frac {-1}3$};
	\node[below] at (0, 0) {$\frac 01$};
	\node[below] at (1/3, 0) {$\frac 13$};
	\node[below] at (1/2, 0) {$\frac 12$};
	\node[below] at (2/3, 0) {$\frac 23$};
	\node[below] at (1, 0) {$\frac 11$};
	\node[below] at (4/3, 0) {$\frac 43$};
	\node[below] at (3/2, 0) {$\frac 32$};
	\node[below] at (5/3, 0) {$\frac 53$};
	\node[below] at (2, 0) {$\frac 21$};
	\node[below] at (7/3, 0) {$\frac 73$};
	\node[below] at (5/2, 0) {$\frac 52$};
	\node[below] at (8/3, 0) {$\frac 83$};
	\node[below] at (3, 0) {$\frac 31$};

	\node at (-.5, .8) {$H_1$};
	\node at (.5, .8) {$I$};
	\node at (1.5, .8) {$H_0$};
	\node at (2.5, .8) {$H_0 H_1$};
	\node at (-.5, .35) {$H_1 H_\infty$};
	\node at (.5, .35) {$H_\infty$};
	\node at (1.5, .35) {$H_0 H_\infty$};
	\node at (2.5, .35) {$H_0H_1H_\infty$};
	\node at (.3-0.05, .18) {$H_\infty H_1$};
	\node at (.7, .18) {$H_\infty H_0$};
	
	\draw[thick] (-1, 0) -- (3, 0);
	\draw[thick,blue] (-1,1) -- (-1,0);
	\draw[thick,red] (0,1) -- (0,0);
	\draw[thick,blue] (1,1) -- (1,0);
	\draw[thick,red] (2,1) -- (2,0);
	\draw[thick,blue] (3,1) -- (3,0);

    \draw[thick,green] (-1,0) arc (180:0:1/2);
    \draw[thick,blue] (-1,0) arc (180:0:1/4);
    \draw[thick,green] (-1,0) arc (180:0:1/6);

    \draw[thick,red] (0,0) arc (0:180:1/4);
    \draw[thick,red] (0,0) arc (180:0:1/4);
    \draw[thick,green] (0,0) arc (0:180:1/6);
    \draw[thick,green] (0,0) arc (180:0:1/6);

    \draw[thick,green] (1,0) arc (0:180:1/2);
    \draw[thick,green] (1,0) arc (180:0:1/2);
    \draw[thick,blue] (1,0) arc (0:180:1/4);
    \draw[thick,blue] (1,0) arc (180:0:1/4);
    \draw[thick,green] (1,0) arc (0:180:1/6);
    \draw[thick,green] (1,0) arc (180:0:1/6);

    \draw[thick,red] (2,0) arc (0:180:1/4);
    \draw[thick,red] (2,0) arc (180:0:1/4);
    \draw[thick,green] (2,0) arc (0:180:1/6);
    \draw[thick,green] (2,0) arc (180:0:1/6);

    \draw[thick,green] (3,0) arc (0:180:1/2);
    \draw[thick,blue] (3,0) arc (0:180:1/4);
    \draw[thick,green] (3,0) arc (0:180:1/6);
    
    \draw[thick,red] (-1/2,0) arc (0:180:1/12);
    \draw[thick,blue] (-1/2,0) arc (180:0:1/12);
    \draw[thick,blue] (1/2,0) arc (0:180:1/12);
    \draw[thick,red] (1/2,0) arc (180:0:1/12);
    \draw[thick,red] (3/2,0) arc (0:180:1/12);
    \draw[thick,blue] (3/2,0) arc (180:0:1/12);
    \draw[thick,blue] (5/2,0) arc (0:180:1/12);
    \draw[thick,red] (5/2,0) arc (180:0:1/12);

    \draw[->] (0.414,1) -- (0.414,0.6);
    \draw (0.414,0.6) -- (0.414,0);

    \node at (0.414,-0.05) {$x$};

    \draw[thick,red] (2/5,0) arc (180:0:1/20);
    \draw[thick,blue] (2/5,0) arc (0:180:1/30);

\end{tikzpicture}
\caption{ 
The tessellation given by the $\Delta$-images of $\mathbf T$.
Each triangle is the image of the transformation written on it.
The red lines, blue and green lines indicate the axes of the conjugates of the reflections of $H_1$, $H_0$ and $H_\infty$.
}
\label{Fig_fundamental}
\end{figure}

As in the previous section, we set $\{\alpha, \beta, \gamma\} =  \{ 0,1,\infty\}$.
Since $H_\alpha \cdot I_{\alpha} = I_\beta \cup I_\gamma,$ we have
\begin{equation*}
I_{\alpha} = H_\alpha \cdot I_\beta \cup H_\alpha \cdot I_\gamma.
\end{equation*}
Inductively, we have, for $\alpha_1 \in \{ 0,1,\infty\}$, for any $m\ge 1$, 
\begin{equation}\label{eq:I}
I_{\alpha_1} = \bigcup_{\substack{\alpha_2, \dots \alpha_m \\ \alpha_i \ne \alpha_{i-1}} } H_{\alpha_1} H_{\alpha_2} \cdots H_{\alpha_{m-1}} \cdot I_{\alpha_m}. 
\end{equation}

For an irrational number $x \in \mathbb R \setminus \mathbb Q$, we put an infinite sequence $( \alpha_i(x) )_{i=1}^\infty \in \{ 0,1,\infty \}^{\mathbb N}$ with $\alpha_i(x) \ne \alpha_{i+1}(x)$ for $i \ge 1$, given by recursively
$$
\alpha_{m} (x) = \alpha \quad \text{ if } \ \left( H_{\alpha_1} \cdots H_{\alpha_{m-1}} \right)^{-1} \cdot x =  H_{\alpha_{m-1}} \cdots H_{\alpha_1} \cdot x \in I_\alpha.
$$
Then the sequence $( \alpha_i(x) )_{i=1}^\infty$ represents a boundary point $x \in \mathbb R \cup \{ \infty \}$ of the hyperbolic space $\mathbb H$.
We call $[ \alpha_1(x), \alpha_2(x), \alpha_3(x), \cdots ]_{\Delta}$ the \emph{$\Delta$-expression} of $x$.
We obtain the $\Delta$-expression as a cutting sequence of a geodesic toward $x$ under the tessellation given by the $\Delta$-images of $\mathbf T$.
As in Figure~\ref{Fig_fundamental}, we color the $\Delta$-image of  the geodesic joining $\infty$ and $0$ red, the $\Delta$-image of the geodesic joining $\infty$ and $1$ blue, and the $\Delta$-image of the geodesic joining $0$ and $1$ green.
We denote the geodesic from $\infty$ toward $x$ by $\ell$.
We record the colors of the geodesics that $\ell$ passes through.
Then, by changing each color to the corresponding letter, we have a $\Delta$-expression of $x$, where red corresponds to $1$, blue corresponds to $0$ and green corresponds to $\infty$.
Note that
\begin{equation}\label{MHrel}
M_1 \cdots M_m \cdot \mathbf T = H_{\alpha_1} H_{\alpha_2 } \cdots H_{\alpha_m}\cdot \mathbf T.
\end{equation}

Since $I_\beta\cap I_\gamma=\{\alpha\}\not=\emptyset$ for $\{\alpha,\beta,\gamma\}=\{0,1,\infty\}$, we have  two $\Delta$-expressions of $\alpha$.
For $x=\alpha$, the first letter $\alpha_1$ can be $\beta$ or $\gamma$.
With the condition $\alpha_i(x)\not=\alpha_{i+1}(x)$, we have
\begin{equation}\label{eq:alpha}
\alpha = [\overline{\beta, \gamma} ]_\Delta := [\beta,\gamma,\beta,\gamma,\cdots]_\Delta\quad\text{ and }\quad
\alpha = [\overline{\gamma, \beta} ]_\Delta := [\gamma,\beta,\gamma,\beta,\cdots]_\Delta.
\end{equation}
If $x\in\mathbb Q^{(\alpha)}$, then there is $m$ such that $H_{\alpha_{m-1}} \cdots H_{\alpha_1}\cdot x=\alpha$.
Suppose that $m$ is minimal.
Then $\alpha_{m-1}=\alpha$, and $[\alpha_m,\alpha_{m+1},\cdots]_\Delta$ is one of the $\Delta$-expressions as in \eqref{eq:alpha}.
Except for rational points (including $\infty$), all irrational numbers have a unique expression $x = [ \alpha_1 (x), \alpha_2 (x), \alpha_3 (x), \cdots ]_{\Delta}$ with $\alpha_i \ne \alpha_{i+1}$.

For $\alpha_1, \dots, \alpha_m$ with $\alpha_i \ne \alpha_{i+1}$ for $1 \le i \le m-1$, let 
$$
I_{\alpha_1, \alpha_2, \dots, \alpha_m} :=
H_{\alpha_1} H_{\alpha_2} \cdots H_{\alpha_{m-1}} \cdot I_{\alpha_m}.$$
Then it is a cylinder set:
$$ I_{\alpha_1, \alpha_2, \dots, \alpha_m} = \{ [\delta_1, \delta_2, \dots, \delta_m,\delta_{m+1},\dots ]_\Delta \in \mathbb R \cup \{ \infty \} \, : \, \delta_i = \alpha_i \text{ for }1\le i\le m \}.$$

Let $(\alpha_i)_{i=1}^\infty \in \{ 0,1,\infty\}^{\mathbb N}$ be a sequence with $\alpha_i \ne \alpha_{i+1}$ for all $i \ge 1$. 
Choose $\beta_{m}$, $\gamma_m$ as $\{ \alpha_{m}, \beta_{m}, \gamma_m \} = \{ 0 , 1 , \infty \}$ for $m \ge 1$.
Then the two end points of $I_{\alpha_m}$ are $\beta_{m} = [\overline{\alpha_m,\gamma_{m}}]_\Delta$ and $\gamma_{m} = [\overline{\alpha_m,\beta_{m}}]_\Delta$. 
Therefore, the two end points of 
$I_{\alpha_1, \alpha_2, \dots, \alpha_m}$
are 
\begin{equation}\label{eq:ends}
\begin{split}
& H_{\alpha_1} \cdots H_{\alpha_{m-1}} \cdot \beta_{m} = [\alpha_1, \alpha_2, \dots, \alpha_{m-1}, \overline{\alpha_m , \gamma_m}]_\Delta \in \mathbb Q^{(\beta_m)}, \\
&H_{\alpha_1} \cdots H_{\alpha_{m-1}} \cdot \gamma_m = [\alpha_1, \alpha_2, \dots, \alpha_{m-1}, \overline{\alpha_{m} , \beta_{m}}]_\Delta \in \mathbb Q^{(\gamma_m)}.
\end{split}
\end{equation}

\begin{example}
Let $x = \sqrt 2 -1 = 0.4142\dots = [0; \overline{2}].$
Its $\Delta$-expression starts from $\infty, 1,0,1$, see Figure~\ref{Fig_fundamental}.
Since we have
$$
H_\infty H_1 H_0 H_1 \cdot x = \begin{pmatrix} 1 & 2 \\ 2 & 5 \end{pmatrix} \cdot x = x,
$$
its $\Delta$-expression is
$$x = [ \overline{\infty, 1,0,1}]_\Delta,$$
i.e., $\alpha_{4k+1}=\infty$, $\alpha_{4k+2}=1$, $\alpha_{4k+3}=0$, $\alpha_{4k+4}=1$ for $k\ge 0$.
We have
$$(H_\infty H_1 H_0 H_1)^k = \begin{pmatrix}P_{2k-1}&P_{2k}\\P_{2k}&P_{2k+1}\end{pmatrix},$$
where $P_n$ is a Pell number, i.e.,  $P_n = 2P_{n-1}+P_{n-2}$ with $P_0=0$ and $P_1=1$.
Thus we have
\begin{align*}
& \prod_{i=1}^{4k} H_{\alpha_i} = 
\begin{pmatrix}P_{2k-1}&P_{2k}\\P_{2k}&P_{2k+1}\end{pmatrix}
& \text{ and }  & & I_{\alpha_1,\cdots,\alpha_{4k+1}} = \Bigg[\frac{P_{2k}}{P_{2k+1}},\frac{P_{2k-1}+P_{2k}}{P_{2k}+P_{2k+1}}\Bigg],\\
& \prod_{i=1}^{4k+1} H_{\alpha_i} = 
\begin{pmatrix}P_{2k+1}&-P_{2k}\\P_{2k+2}&-P_{2k+1}\end{pmatrix}
& \text{ and }  & & I_{\alpha_1,\cdots,\alpha_{4k+2}} = \Bigg[\frac{P_{2k}}{P_{2k+1}},\frac{P_{2k+1}}{P_{2k+2}}\Bigg],\\
& \prod_{i=1}^{4k+2} H_{\alpha_i} = 
\begin{pmatrix}P_{2k+1}&P_{2k}\\P_{2k+2}&P_{2k+1}\end{pmatrix}
& \text{ and }  & & I_{\alpha_1,\cdots,\alpha_{4k+3}} = \Bigg[\frac{P_{2k}+P_{2k+1}}{P_{2k+1}+P_{2k+2}},\frac{P_{2k+1}}{P_{2k+2}}\Bigg],\\
& \prod_{i=1}^{4k+3} H_{\alpha_i} = 
\begin{pmatrix}P_{2k+1}&-P_{2k+2}\\P_{2k+2}&-P_{2k+3}\end{pmatrix}
& \text{ and }  & &
I_{\alpha_1,\cdots,\alpha_{4k+4}} = \Bigg[\frac{P_{2k+2}}{P_{2k+3}},\frac{P_{2k+1}}{P_{2k+2}}\Bigg].
\end{align*}
Further, for all $m\geq 1$, $\sqrt{2}-1\in I_{\alpha_1,\alpha_2,\dots,\alpha_m}$.
\end{example}

\subsection{Convergents of $\Delta$-expression}
A fundamental domain of the triangle group $\Delta$ is the ideal triangle $\mathbf T$ with vertices $0$, $1$ and $\infty$. 
Let 
$$
J := \begin{pmatrix} 0 & 1 \\ 1 & 0 \end{pmatrix} \quad\text{ and }\quad
K := \begin{pmatrix} -1 & 1 \\ 0 & 1 \end{pmatrix}.
$$
Then, $JKJ = KJK$.
The group
$$
\Gamma := \{ I , J, K, JK, KJ, JKJ\}
$$
forms a symmetric group of the ideal triangle $\mathbf{T}$.
Note that $\Gamma$ is isomorphic to the symmetric group of three elements.
See Figure~\ref{Fig_tri} for the figure of $\mathbf T$ and the axes of the reflections $H_\alpha$, $J$, $K$ and $JKJ$.
We regard an element $S$ of $\Gamma$ as a permutation $\sigma_S$ on $\{ 0,1,\infty\}$ defined by
\begin{equation}\label{eq:sig}
\sigma_S (\alpha) = S \cdot \alpha.
\end{equation}
Then we have the following relation
\begin{equation}\label{conjS}
H_{\sigma_S (\alpha)} = H_{S \cdot \alpha} = S H_\alpha S^{-1}.
\end{equation}

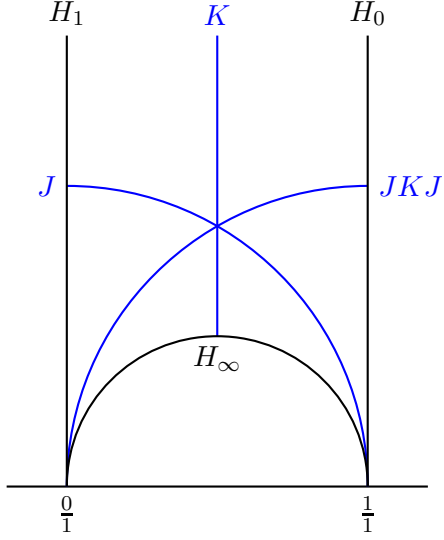
\begin{figure}
\begin{tikzpicture}[scale=4]
\node[below] at (0, 0) {$\frac 01$};
\node[below] at (1, 0) {$\frac 11$};

\draw[thick] (-.2, 0) -- (1.2, 0);
\draw[thick,blue] (1/2,1/2) -- (1/2,1.5) node[above] {$K$};
\draw[thick,blue] (0,0) arc (180:90:1) node[right] {$JKJ$};
\draw[thick,blue] (1,0) arc (0:90:1) node[left] {$J$};

\draw[thick] (0,0) -- (0,1.5) node[above] {$H_1$};
\draw[thick] (1,0) -- (1,1.5) node[above] {$H_0$};
\draw[thick] (0,0) arc (180:0:1/2);
\node[below] at (1/2, 1/2) {$H_\infty$};

\end{tikzpicture}
\caption{The fundamental domain of the triangle group $\Delta$ (surrounded by $H_1,H_0,H_\infty$) and $\langle\Delta \cup \Gamma\rangle$ (surrounded by $H_1,K, J$).}
\label{Fig_tri}
\end{figure}

Now we represent the $\Delta$-expression of $x$ according to the sequence $(M_i)_{i=1}^\infty$ defined in Section~\ref{sec:Series}. 

\begin{proposition}
Let $x$ be an irrational number with the $\Delta$-expression $[\alpha_1, \alpha_2, \dots ]_\Delta$ and the regular continued fraction $[a_0; a_1, a_2, \dots]$.
Let $(M_i)_{i=1}^\infty$ be given by the cutting sequence $L^{a_0} R^{a_1} L^{a_2} \cdots$
and let
\begin{equation}\label{eq:S}
S_i = 
\begin{cases}
K &\text{ if } \ M_i = L \ \text{ or } \ L^{-1}, \\
JKJ &\text{ if } \  M_i = R
\end{cases}
\qquad \text{and} \qquad
\eta_i = 
\begin{cases}
0 &\text{ if } \ M_i = L, \\
1 &\text{ if } \ M_i = L^{-1}, \\
\infty &\text{ if } \  M_i = R.
\end{cases}
\end{equation}
Then we have
\begin{equation}\label{alpha}
\alpha_m = S_1 \cdots S_{m-1} \cdot \eta_m
\end{equation}
and
\begin{equation}\label{MH}
M_1 \cdots M_m = H_{\alpha_1} H_{\alpha_2} \cdots H_{\alpha_m} S_1 \cdots S_{m}.
\end{equation}
\end{proposition}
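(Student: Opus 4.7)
The plan is to prove \eqref{alpha} and \eqref{MH} simultaneously by induction on $m$, using the triangle-identity \eqref{MHrel} and the conjugation rule \eqref{conjS} as the main tools. The conceptual content is that the ``twist'' $S_1\cdots S_m \in \Gamma$ accounts precisely for the mismatch between the canonical orientation of the tile $\mathbf T$ and the orientation it inherits as an image under the cutting sequence.

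First I would verify three identities by direct $2\times 2$ matrix multiplication in $\mathrm{PGL}_2(\mathbb Z)$:
$$L = H_0 K,\qquad L^{-1} = H_1 K,\qquad R = H_\infty\cdot JKJ.$$
Read case by case, this says exactly $M_i = H_{\eta_i} S_i$ for every letter in the cutting sequence. The base $m=1$ of \eqref{MH} is one of these three identities, while \eqref{alpha} at $m=1$ reduces to $\alpha_1 = \eta_1$, which holds because $M_1 = L$ forces $a_0\ge 1$ and hence $x\in I_0$, with the cases $M_1=L^{-1}$ and $M_1=R$ being entirely analogous using $I_1$ and $I_\infty$.

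For the inductive step, I would multiply the induction hypothesis on the right by $M_{m+1}$ and apply \eqref{MHrel} at level $m+1$ to reduce the problem to the single triangle identity
$$
S_1\cdots S_m \cdot M_{m+1}\cdot \mathbf T \;=\; H_{\alpha_{m+1}}\cdot \mathbf T.
$$
Substituting $M_{m+1} = H_{\eta_{m+1}} S_{m+1}$ and using $S_{m+1}\cdot\mathbf T = \mathbf T$, the left side becomes $S_1\cdots S_m\cdot H_{\eta_{m+1}}\cdot \mathbf T$, which by \eqref{conjS} equals $H_{(S_1\cdots S_m)\cdot \eta_{m+1}}\cdot \mathbf T$. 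Since the three reflections $H_0$, $H_1$, $H_\infty$ send $\mathbf T$ to three distinct neighbours in the $\Delta$-tessellation, \eqref{alpha} at $m+1$ follows at once. With \eqref{alpha} in hand, \eqref{MH} at $m+1$ is one further application of \eqref{conjS} to commute $H_{\eta_{m+1}}$ past $S_1\cdots S_m$.

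The main obstacle is bookkeeping rather than mathematics. One has to distinguish $M_i = L$ from $M_i = L^{-1}$ as genuinely separate cases, even though both are written ``$L$'' in the classical Series coding, since they produce different $(\eta_i, S_i)$; and one has to work throughout in $\mathrm{PGL}_2(\mathbb Z)$ so that the sign ambiguities in $H_\alpha$, $J$, $K$ do not sabotage the three base identities. The only mildly conceptual step is that the map $\beta\mapsto H_\beta\cdot \mathbf T$ from $\{0,1,\infty\}$ to the three neighbours of $\mathbf T$ is injective, which is immediate from the picture of the $\Delta$-tessellation and is what lets us pass from the equality of tiles back to the equality $\alpha_{m+1}=(S_1\cdots S_m)\cdot\eta_{m+1}$.
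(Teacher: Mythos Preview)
Your proof is correct and uses the same key ingredients as the paper: the three factorisations $L=H_0K$, $L^{-1}=H_1K$, $R=H_\infty\, JKJ$, the conjugation rule \eqref{conjS}, and the tile identity \eqref{MHrel}. The organisation differs in two minor respects. First, the paper does not induct: it writes $M_1\cdots M_m=H_{\eta_1}S_1\cdots H_{\eta_m}S_m$ and pushes all the $S_i$ to the right in one stroke via \eqref{conjS}, obtaining $H_{\eta_1}H_{S_1\cdot\eta_2}\cdots H_{S_1\cdots S_{m-1}\cdot\eta_m}\,S_1\cdots S_m$; applying this to $\mathbf T$ and comparing with \eqref{MHrel} yields an equality of two $\Delta$-words acting on $\mathbf T$. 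Second, the identification step is different: the paper invokes the uniqueness of reduced words in the free product $\Delta\cong(\mathbb Z/2)*(\mathbb Z/2)*(\mathbb Z/2)$ to match the letters, whereas you use the geometric injectivity of $\beta\mapsto H_\beta\cdot\mathbf T$ at each stage. Your route is marginally more elementary since it avoids the free-product normal-form statement; the paper's is marginally shorter since the whole expansion is done in one line.
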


\begin{proof}
We have the relations between $\{L,R\}$ and $\{H_0,H_1,H_\infty\}$ along the elements of the permutation group $\Gamma$ as follows:
\begin{equation}\label{LR}
L = H_0 K, \qquad L^{-1} = H_1 K \quad\text{ and }\quad R = H_\infty JKJ.
\end{equation}
Therefore, we have 
$$
M_i = H_{\eta_i} S_i.
$$
From the relation \eqref{eq:sig}, we have
\begin{align*}
M_1 \cdots M_m &= H_{\eta_1} S_1 H_{\eta_2} S_2 \cdots H_{\eta_m} S_m \\
&= H_{\eta_1} H_{S_1 \cdot \eta_2} H_{S_1 S_2 \cdot \eta_3} \cdots H_{S_1 \cdots S_{m-1} \cdot \eta_m} S_1 \cdots S_m.
\end{align*}
Since $S_i\cdot\mathbf T=\mathbf T$, we have 
$$M_1 \cdots M_m \cdot \mathbf T = H_{\eta_1} H_{S_1 \cdot \eta_2} H_{S_1 S_2 \cdot \eta_3} \cdots H_{S_1 \cdots S_{m-1} \cdot \eta_m}\cdot \mathbf T,$$
which is the $m$-th ideal triangle intersecting the geodesic $g$ on the Farey tessellation $\mathcal F$.
Note that $H\cdot \mathbf T=\mathbf T$ for $H\in\Delta$ implies $H=I$.
By \eqref{MHrel},
we have
$$
H_{\eta_1} H_{S_1 \cdot \eta_2} H_{S_1 S_2 \cdot \eta_3} \cdots H_{S_1 \cdots S_{m-1} \cdot \eta_m} = H_{\alpha_1} H_{\alpha_2 } \cdots H_{\alpha_m}.
$$
Since there is no relation for generators $H_0, H_1, H_\infty$ except for $H_0^2=H_1^2=H_\infty^2 = I$ in the triangle group $\Delta$, any element of $\Delta$ is uniquely expressed by $H_0$, $H_1$ and $H_\infty$.
Therefore, we deduce \eqref{alpha} and \eqref{MH}.
\end{proof}

Let $x\in\mathbb R\setminus\mathbb Q$. Let $[\alpha_1, \alpha_2, \cdots ]_\Delta$ be the $\Delta$-expression of $x$ and $[a_0; a_1, a_2, \dots]$ be the regular continued fraction expansion of $x$. 
For $m \ge |a_0 |$, we define $n \ge 0$ and $0 \le k < a_{n+1}$ such that $m = |a_0|+ a_1 + \cdots + a_n + k$.
If $m \ge |a_0| +1$, then $M_m = L$ or $R$.
By \eqref{eq:S}, we have
\begin{equation}\label{betaS}
\eta_m = S_m \cdot 1 \qquad \text{ for } \ M_m = L \ \text{ or } \  R.
\end{equation}
Therefore, using \eqref{alpha}, we have
\begin{equation}\label{alphap}
\alpha_m = S_1 \cdots S_{m-1} \cdot \eta_m = 
S_1 \cdots S_{m-1} S_m \cdot 1.
\end{equation} 
For $m\geq 1$, let $\delta_{m+1}$ be a letter in $\{0,1,\infty\}$ such that 
\begin{equation}\label{eq:del}\{ \alpha_{m}, \alpha_{m+1}, \delta_{m+1} \} = \{ 0 , 1 , \infty \}.\end{equation}
We distinguish three cases.\\
\noindent
(i) $a_0 >0$ and $1\le m \le a_0$. Then, 
by \eqref{Farey1}, \eqref{alpha} and \eqref{MH}, 
$$ H_{\alpha_1} \cdots H_{\alpha_m}  \cdot \alpha_{m+1} = \begin{cases} m, &m < a_0,\\
\infty, &m = a_0,
\end{cases}
\quad
H_{\alpha_1} \cdots H_{\alpha_m}  \cdot \delta_{m+1} = 
\begin{cases}
\infty, &m < a_0, \\
a_0, &m = a_0.
\end{cases}
$$
\noindent (ii) $a_0 <0$ and $1\le m \le |a_0|$. Then, by \eqref{Farey1}, \eqref{alpha} and \eqref{MH}, 
$$ H_{\alpha_1} \cdots H_{\alpha_m}  \cdot \alpha_{m+1} = \begin{cases}
-m+1, &m < |a_0|,\\
\infty, &m = |a_0|,
\end{cases}
\quad
H_{\alpha_1} \cdots H_{\alpha_m}  \cdot \delta_{m+1} = \begin{cases}
\infty, &m < |a_0|,\\
a_0+1, &m = |a_0|.
\end{cases}
$$
\noindent (iii) $m \ge  | a_0 | +1$. Then, by \eqref{Farey2}, \eqref{alpha} and \eqref{MH}, 
\begin{equation}\label{HHH}
H_{\alpha_1} H_{\alpha_2} \cdots H_{\alpha_m} \cdot \alpha_{m+1} = \frac{k p_n + p_{n-1}}{k q_n + q_{n-1}},
\qquad 
H_{\alpha_1} H_{\alpha_2} \cdots H_{\alpha_m} \cdot \delta_{m+1} = \frac{p_n}{q_n}.
\end{equation}

\section{Algorithms for finding best rational approximations}
\label{sec:BA}

In this section, we express best approximations, best signed approximations, best $(\alpha)$-rational approximations and best $(\alpha,\beta)$-rational approximations in terms of the $\Delta$-expression.   

\subsection{Best approximation and best signed approximation}
Let $x$ be an irrational number with the $\Delta$-expression $[\alpha_1, \alpha_2, \cdots ]_\Delta$ and the regular continued fraction $[a_0; a_1, a_2, \cdots]$.

\begin{lemma}\label{k=0}
Let $m \ge |a_0| +2$. A letter $\delta_m$ is defined as in \eqref{eq:del}.
For $m = |a_0| + a_1 + \dots + a_n + k$ with $n\ge 0$ and $0 \le k < a_{n+1}$, we have
$$\delta_m = \delta_{m+1},\text{ (i.e., }
\alpha_{m-1}=\alpha_{m+1}\text{)} \quad \text{ if and only if } \quad  k \ne 0. $$
\end{lemma}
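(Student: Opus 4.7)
First I rewrite the condition $\delta_m=\delta_{m+1}$ in a more usable form. By the defining relation \eqref{eq:del}, $\delta_m$ is the unique element of $\{0,1,\infty\}\setminus\{\alpha_{m-1},\alpha_m\}$ and $\delta_{m+1}$ is the unique element of $\{0,1,\infty\}\setminus\{\alpha_m,\alpha_{m+1}\}$; combined with $\alpha_{m-1}\neq\alpha_m$ and $\alpha_{m+1}\neq\alpha_m$, the equality $\delta_m=\delta_{m+1}$ is exactly $\alpha_{m-1}=\alpha_{m+1}$. So it only remains to decide when $\alpha_{m-1}=\alpha_{m+1}$ in terms of $k$.

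Because $m\ge |a_0|+2$, formula \eqref{alphap} applies to both indices $m-1$ and $m+1$, giving $\alpha_{m-1}=S_1\cdots S_{m-1}\cdot 1$ and $\alpha_{m+1}=S_1\cdots S_{m+1}\cdot 1$. After cancelling the common prefix, the equation $\alpha_{m-1}=\alpha_{m+1}$ becomes $S_m S_{m+1}\cdot 1=1$. As permutations of $\{0,1,\infty\}$, the matrices $K$ and $JKJ$ are the transpositions $(0\ 1)$ and $(1\ \infty)$ respectively; each $S_i$ is therefore an involution, and the equation simplifies to $S_m\cdot 1=S_{m+1}\cdot 1$. Computing from \eqref{eq:S}, one has $S_i\cdot 1=0$ when $M_i\in\{L,L^{-1}\}$ and $S_i\cdot 1=\infty$ when $M_i=R$; since the hypothesis $m\ge |a_0|+2$ forces both $M_m$ and $M_{m+1}$ to lie in $\{L,R\}$, the condition $S_m\cdot 1=S_{m+1}\cdot 1$ reduces to the plain equality $M_m=M_{m+1}$.

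It remains to translate $M_m=M_{m+1}$ into the stated condition on $k$. In the parameterisation $m=|a_0|+a_1+\cdots+a_n+k$ with $0\le k<a_{n+1}$, the position $m$ is the last letter of the $n$-th block of the cutting sequence $L^{a_0}R^{a_1}L^{a_2}\cdots$ exactly when $k=0$, and in that case $M_{m+1}$ starts the $(n+1)$-th block, which has the opposite type; for $1\le k<a_{n+1}$, both positions $m$ and $m+1$ lie within the $(n+1)$-th block, which is a constant run. Hence $M_m=M_{m+1}$ if and only if $k\neq 0$, completing the argument. No step is genuinely hard; the only point requiring a moment of care is matching the indexing $k=0$ with a block boundary in this convention.
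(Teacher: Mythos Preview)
Your proof is correct and follows essentially the same route as the paper's: reduce $\delta_m=\delta_{m+1}$ to $\alpha_{m-1}=\alpha_{m+1}$ via \eqref{eq:del}, use the formulas \eqref{alpha}/\eqref{alphap} to rewrite this as a condition on $S_m,S_{m+1}$ (equivalently $M_m,M_{m+1}$), and then read off $M_m=M_{m+1}\iff k\neq 0$ from the block structure of the cutting sequence. The only cosmetic difference is that the paper expresses $\alpha_{m+1}$ via \eqref{alpha} as $S_1\cdots S_m\cdot\eta_{m+1}$ rather than going one step further with \eqref{alphap}, but the resulting case check is the same.
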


\begin{proof}
By \eqref{alphap} and \eqref{alpha}, we have
$$\alpha_{m-1} = S_1 \cdots S_{m-1} \cdot 1 ,\qquad \alpha_{m+1} = 
S_1 \cdots S_{m-1} S_m \cdot \eta_{m+1}.
$$
Therefore, 
$$\alpha_{m-1} = \alpha_{m+1} \ \text{ if and only if } \  M_{m} = M_{m+1}.$$
 Note that $M_{m} \ne M_{m+1}$ if and only if $k = 0$. We then have
\begin{align}\label{l-equi1}
    \alpha_{m-1} = \alpha_{m+1} \ \text{ if and only if } \ k\neq 0.
\end{align}
On the other hand, since $\{\alpha_{m-1},\alpha_m,\delta_m\}=\{\alpha_m,\alpha_{m+1},\delta_{m+1}\}$, we have 
$$\{\alpha_{m-1},\delta_m\}=\{\alpha_{m+1},\delta_{m+1}\}.$$ Thus,
\begin{align}\label{l-equi2}
\alpha_{m-1} = \alpha_{m+1}
\ \text{ if and only if } \ \delta_{m} = \delta_{m+1}.
\end{align}
Combining \eqref{l-equi1} and \eqref{l-equi2} completes the proof.
\end{proof}

\begin{proposition}\label{BA}
For $x = [\alpha_1, \alpha_2, \cdots ]_\Delta $, we have 
$$
\mathscr{B}(x) = \{ [ \alpha_1, \dots, \alpha_{m-1}, \overline{ \alpha_m, \alpha_{m+1}}]_\Delta \in \mathbb Q \, | \, m \ge | a_0| +1 \}
$$
and
$$
\mathscr{S}(x) = \{ [\alpha_1, \dots, \alpha_{m-1}, \overline{ \alpha_m, \delta_{m+1}}]_\Delta \in \mathbb Q \, | \, m \ge |a_0| +1 \},
$$
where $\delta_{m+1}$ satisfies $\{ \alpha_{m}, \alpha_{m+1}, \delta_{m+1} \} = \{ 0 , 1 , \infty \}$ for $m \ge 1$.
Moreover, we have 
\begin{align*}
[\alpha_1, \dots, \alpha_{m-1}, \overline{ \alpha_m, \delta_{m+1}}]_\Delta &\in \mathscr{S}_{\delta_{m+1}} (x)
& &\text{ if } \ m = |a_0| +1 \ \text{ or } \ \alpha_{m-1} = \alpha_{m+1}, \\
[\alpha_1, \dots, \alpha_{m-1}, \overline{ \alpha_m, \delta_{m+1}}]_\Delta &\in \mathscr{B} (x)
& &\text{ if } \ m \ge |a_0| +2 \ \text{ and } \ \alpha_{m-1} \ne \alpha_{m+1}.
\end{align*}
\end{proposition}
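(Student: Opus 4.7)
The plan is to translate each $\Delta$-expression in the statement into a classical convergent of the regular continued fraction, and then apply Lagrange's theorem \eqref{eq:P=B} together with Proposition~\ref{prop_int}.

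First I would unwind the notation. The matrix $H_\alpha$ fixes the two points in $\{0,1,\infty\} \setminus \{\alpha\}$, as one checks directly. Combined with the identity $[\overline{\beta,\gamma}]_\Delta = \alpha$ for $\{\alpha,\beta,\gamma\}=\{0,1,\infty\}$ established in \eqref{eq:alpha}, this gives $[\overline{\alpha_m,\alpha_{m+1}}]_\Delta = \delta_{m+1}$ and $[\overline{\alpha_m,\delta_{m+1}}]_\Delta = \alpha_{m+1}$. Unfolding the cylinders and using that $H_{\alpha_m}$ fixes both $\alpha_{m+1}$ and $\delta_{m+1}$ (since these differ from $\alpha_m$), I obtain
\begin{align*}
[\alpha_1,\dots,\alpha_{m-1},\overline{\alpha_m,\alpha_{m+1}}]_\Delta &= H_{\alpha_1}\cdots H_{\alpha_m}\cdot \delta_{m+1}, \\
[\alpha_1,\dots,\alpha_{m-1},\overline{\alpha_m,\delta_{m+1}}]_\Delta &= H_{\alpha_1}\cdots H_{\alpha_m}\cdot \alpha_{m+1}.
\end{align*}
Writing $m \ge |a_0|+1$ as $m = |a_0|+a_1+\cdots+a_n+k$ with $n \ge 0$ and $0\le k < a_{n+1}$, the identity \eqref{HHH} then asserts these two numbers equal $p_n/q_n$ and $(kp_n+p_{n-1})/(kq_n+q_{n-1})$ respectively.

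Next I would read off the two set equalities. By Lagrange's theorem \eqref{eq:P=B}, $\mathscr{B}(x)$ consists of the principal convergents (excluding $p_0/q_0$ precisely when $a_1=1$); since $(n,k) \mapsto p_n/q_n$ enumerates all of these as $m$ ranges over $m \ge |a_0|+1$ (the boundary case $a_1=1$ forces $n\ge 1$ automatically), the first equality follows. For the second equality, Proposition~\ref{prop_int} identifies $\mathscr{S}(x)$ with the union of principal and intermediate convergents; but $(kp_n+p_{n-1})/(kq_n+q_{n-1})$ equals $p_{n-1}/q_{n-1}$ when $k=0$ and is the intermediate convergent $p_{n+1,k}/q_{n+1,k}$ when $k\ge 1$, so the parametrization exhausts $\mathscr{S}(x)$.

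For the refined assertions, I would invoke Lemma~\ref{k=0}, which gives $\alpha_{m-1}=\alpha_{m+1} \Leftrightarrow k\ne 0$ (valid for $m\ge |a_0|+2$). If $k=0$ and $m\ge |a_0|+2$, then $n\ge 1$ and, in the edge case $n=1$, the constraint forces $a_1\ge 2$; so $p_{n-1}/q_{n-1}\in \mathscr{B}(x)$ by Lagrange. If $k\ge 1$, or in the initial case $m=|a_0|+1$, the value is an intermediate convergent $p_{n+1,k}/q_{n+1,k}$; the discussion following Proposition~\ref{prop_int} tells us it lies in $\mathscr{S}_\alpha(x)$ exactly when $p_n/q_n\in \mathbb{Q}^{(\alpha)}$. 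Since $p_n/q_n = [\alpha_1,\dots,\alpha_{m-1},\overline{\alpha_m,\alpha_{m+1}}]_\Delta$ is the endpoint of $I_{\alpha_1,\dots,\alpha_m}$ computed in \eqref{eq:ends} as lying in $\mathbb{Q}^{(\delta_{m+1})}$, we conclude the expression is in $\mathscr{S}_{\delta_{m+1}}(x)$.

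The expected obstacle is not any single deep step but rather the careful bookkeeping in matching ranges: verifying that the parametrization $m=|a_0|+a_1+\cdots+a_n+k$ hits each principal convergent at least once and each intermediate convergent exactly once, while correctly handling the boundary subcases ($n=0$, $a_1=1$, and $m=|a_0|+1$) dictated by Lagrange's exclusion of $p_0/q_0$ and by the definition of $\mathscr{S}_\alpha$ at the initial level.
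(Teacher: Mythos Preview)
Your approach is essentially identical to the paper's: both arguments unfold the $\Delta$-expressions via \eqref{eq:alpha} and \eqref{HHH} into classical convergents, then invoke \eqref{eq:P=B} and Proposition~\ref{prop_int} for the two set equalities, and finally use Lemma~\ref{k=0} together with the discussion after Proposition~\ref{prop_int} for the refined membership in $\mathscr{S}_{\delta_{m+1}}$ versus $\mathscr{B}$. One small imprecision: in the initial case $m=|a_0|+1$ with $a_1=1$ you have $n=1$, $k=0$, so the value is $p_0/q_0$, not an intermediate convergent $p_{n+1,k}/q_{n+1,k}$ as you wrote; the paper handles this by splitting $m=|a_0|+1$ into the subcases $a_1=1$ and $a_1\ge 2$ explicitly, but your subsequent appeal to ``$p_n/q_n\in\mathbb{Q}^{(\delta_{m+1})}$'' still gives the correct conclusion since the discussion after Proposition~\ref{prop_int} covers $p_0/q_0$ via $p_1/q_1$.
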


\begin{proof}
By \eqref{eq:alpha}, we have
$$\delta_{m+1} =[\overline{\alpha_{m}, \alpha_{m+1}}]_\Delta,  
\qquad \alpha_{m+1} = [\overline{\alpha_{m}, \delta_{m+1}}]_\Delta.$$
Therefore, we have
$$
[ \alpha_1, \alpha_2, \dots, \alpha_{m-1}, \overline{\alpha_{m}, \alpha_{m+1}}]_\Delta = H_{\alpha_1} \cdots H_{\alpha_{m-1}} \cdot \delta_{m+1} = H_{\alpha_1} \cdots H_{\alpha_{m-1}} H_{\alpha_{m}} \cdot \delta_{m+1},$$
$$
[ \alpha_1, \alpha_2, \dots, \alpha_{m-1}, \overline{\alpha_{m}, \delta_{m+1}}]_\Delta= H_{\alpha_1} \cdots H_{\alpha_{m-1}} \cdot  \alpha_{m+1} = H_{\alpha_1} \cdots H_{\alpha_{m-1}} H_{\alpha_{m}} \cdot \alpha_{m+1}.$$
Let $m\ge |a_0|+1$.
By \eqref{HHH}, we have
$$
[ \alpha_1, \dots, \alpha_{m-1}, \overline{ \alpha_m, \alpha_{m+1}}]_\Delta = \frac{p_n}{q_n}\quad \text{ and }\quad
[ \alpha_1, \dots, \alpha_{m-1}, \overline{ \alpha_m, \delta_{m+1}}]_\Delta = \frac{kp_n+p_{n-1}}{kq_n+ q_{n-1}},$$
where $n \ge 0$ and $0 \le k < a_{n+1}$ such that $m = |a_0|+ a_1 + \cdots + a_n + k$.
Note that if $a_1 = 1$, then the condition $m \ge |a_0| +1$ implies $n \ge 1$. For other cases, $n$ ranges from $0$. 
Therefore, \eqref{eq:P=B} implies that 
$[\alpha_1, \dots, \alpha_{m-1}, \overline{ \alpha_m, \alpha_{m+1}}]_\Delta$ with $m \ge |a_0| +1$ consists of $\mathscr{B}(x)$.
The condition $m\ge |a_0| +1$ yields that $1 \le k < a_{n+1}$ for $n = 0$ or $0 \le k < a_{n+1}$ for $n \ge 1$. 
By Proposition~\ref{prop_int}, we have 
$\{ [\alpha_1, \dots, \alpha_{m-1}, \overline{ \alpha_m, \delta_{m+1}}]_\Delta \, | \, m \ge |a_0| +1 \} = \mathscr{S}(x) $.

Suppose that $m = |a_0| +1$. 
If $a_1 = 1$, then $m= |a_0| + a_1$ with $n=1$ and $k=0$. Thus,
\begin{align*}
[\alpha_1, \dots, \alpha_{m-1}, \overline{ \alpha_m, \delta_{m+1}}]_\Delta &= \frac{p_0}{q_0} = a_0 \notin\mathscr{B}(x), \\
[\alpha_1, \dots, \alpha_{m-1}, \overline{ \alpha_m, \alpha_{m+1}}]_\Delta &= \frac{p_1}{q_1} = a_0+1 \in\mathscr{B}(x) \cap \mathbb Q^{(\delta_{m+1})}. 
\end{align*}
If $a_1 \ge 2$, then $m = |a_0| + 1$ with $n = 0$ and $k=1$. Thus,
\begin{align*}
[\alpha_1, \dots, \alpha_{m-1}, \overline{ \alpha_m, \delta_{m+1}}]_\Delta &= \frac{p_0+p_{-1}}{q_0+q_{-1}} = a_0+1\notin\mathscr{B}(x), \\
[\alpha_1, \dots, \alpha_{m-1}, \overline{ \alpha_m, \alpha_{m+1}}]_\Delta &= \frac{p_0}{q_0} = a_0\in\mathscr{B}(x) \cap \mathbb Q^{(\delta_{m+1})}.\end{align*}

Suppose that $m \ge |a_0| +2$.
If $\alpha_{m-1} = \alpha_{m+1}$, 
then by Lemma~\ref{k=0}, we have $k \ne 0$. Hence,
\begin{align*}
[\alpha_1, \dots, \alpha_{m-1}, \overline{ \alpha_m, \delta_{m+1}}]_\Delta &= \frac{kp_n + p_{n-1}}{kq_n+q_{n-1}}\notin\mathscr{B}(x), \\
[\alpha_1, \dots, \alpha_{m-1}, \overline{ \alpha_m, \alpha_{m+1}}]_\Delta &= \frac{p_n}{q_n}\in\mathscr{B}(x) \cap \mathbb Q^{(\delta_{m+1})}. \end{align*}
If $\alpha_{m-1} \ne \alpha_{m+1}$, 
then by Lemma~\ref{k=0}, we have $k = 0$. Hence,
\begin{equation*}
[\alpha_1, \dots, \alpha_{m-1}, \overline{ \alpha_m, \delta_{m+1}}]_\Delta = \frac{p_{n-1}}{q_{n-1}} \in\mathscr{B}(x). \qedhere
\end{equation*}
\end{proof}

\begin{example}\label{ex:bs}
For $x = \sqrt 2 -1 = [\overline{\infty, 1 , 0 ,1}]_\Delta$, we have
$$
\mathscr{B}(x) = \left \{ [\overline{\infty,1}]_\Delta = 0, \,
[\infty,\overline{1,0}]_\Delta = \frac 12, \,
[\infty,1,0,\overline{1,\infty}]_\Delta = \frac 25, \,
[\infty,1,0,1,\infty,\overline{1,0}]_\Delta = \frac 5{12}, \,
 \dots \right \}.
$$
Also, we have
\begin{gather*}
\mathscr{S}_0 (x) = \left \{ [\overline{\infty,0}]_\Delta = 1, \,
[\infty, 1,0,1,\overline{\infty, 0}]_\Delta = \frac 37, \,
[\infty, 1,0,1,\infty,1,0,1,\overline{\infty, 0}]_\Delta = \frac{17}{41}, \,
 \dots \right \}, \\
\mathscr{S}_\infty(x) = \left \{ [\infty,1,\overline{0,\infty}]_\Delta = \frac 13, \,
[\infty,1,0,1,\infty,1,\overline{0,\infty}]_\Delta = \frac{7}{17}, \,
 \dots \right \}, \qquad \mathscr{S}_1(x) = \emptyset.
\end{gather*}
\end{example}

\subsection{Best ($\alpha$) and ($\alpha, \beta$)-rational approximations} 
Since 
$$[ \alpha_1, \dots, \alpha_{m-1}, \overline{ \alpha_m, \delta_{m+1}}]_\Delta \in \mathbb Q^{(\alpha)}\ \text{ if and only if } \ \alpha_{m+1} = \alpha,$$ 
applying Theorem~\ref{thm1} to Proposition~\ref{BA}, we have the following proposition. 

\begin{proposition}\label{aBA}
For $x = [\alpha_1, \alpha_2, \cdots ]_\Delta $ and $\alpha \in \{ 0,1,\infty\}$, we have 
$$
\mathscr{B}^{(\alpha)}(x) = \{ [ \alpha_1, \dots, \alpha_{m-1}, \overline{ \alpha_m, \delta_{m+1}}]_\Delta \in \mathbb Q \, | \, \alpha_{m+1} = \alpha, \ m \ge |a_0| +1 \},
$$
where $\delta_{m+1}$ satisfies $\{ \alpha_{m}, \alpha_{m+1}, \delta_{m+1} \} = \{ 0 , 1 , \infty \}$ for $m \ge 1$.
\end{proposition}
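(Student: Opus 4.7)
The plan is to combine the characterization $\mathscr{B}^{(\alpha)}(x) = \mathscr{S}(x) \cap \mathbb Q^{(\alpha)}$ from Theorem~\ref{thm1}\ref{it:thm1-1} with the enumeration of $\mathscr{S}(x)$ provided by Proposition~\ref{BA}, and then to read off the parity class of each candidate from the cylinder-endpoint formula \eqref{eq:ends}. Since Proposition~\ref{BA} already tells us that
\[
\mathscr{S}(x) = \{ [\alpha_1, \dots, \alpha_{m-1}, \overline{\alpha_m, \delta_{m+1}}]_\Delta \in \mathbb Q \,|\, m \ge |a_0|+1\},
\]
all that remains is to decide, for each such representative, which of $\mathbb Q^{(0)}, \mathbb Q^{(1)}, \mathbb Q^{(\infty)}$ it belongs to.

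The key observation is that $r_m := [\alpha_1,\dots,\alpha_{m-1},\overline{\alpha_m,\delta_{m+1}}]_\Delta$ is precisely one of the two endpoints of the cylinder $I_{\alpha_1,\dots,\alpha_m}$ described in \eqref{eq:ends}. Writing $\{\alpha_m,\beta_m,\gamma_m\}=\{0,1,\infty\}$, formula \eqref{eq:ends} says that the endpoint obtained by continuing the expansion with the repeating block $\overline{\alpha_m,\gamma_m}$ lies in $\mathbb Q^{(\beta_m)}$, while the one obtained by continuing with $\overline{\alpha_m,\beta_m}$ lies in $\mathbb Q^{(\gamma_m)}$. Since $\delta_{m+1}$ is defined as the unique element of $\{0,1,\infty\}\setminus\{\alpha_m,\alpha_{m+1}\}$, continuing with $\overline{\alpha_m,\delta_{m+1}}$ corresponds to the endpoint whose parity class is exactly $\alpha_{m+1}$. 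In other words,
\[
r_m \in \mathbb Q^{(\alpha_{m+1})} \quad \text{for every } m \ge |a_0|+1.
\]

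Putting these pieces together, the intersection $\mathscr{S}(x)\cap \mathbb Q^{(\alpha)}$ consists precisely of those $r_m$ with $\alpha_{m+1}=\alpha$, which is the claimed description of $\mathscr{B}^{(\alpha)}(x)$. The argument is essentially a bookkeeping step; the only subtlety I anticipate is being careful that the formulas in \eqref{eq:ends} are applied correctly, namely that $\delta_{m+1}$ fills the role of the repeated second symbol (not $\alpha_{m+1}$), so that the correct endpoint—and therefore the correct parity class $\alpha_{m+1}$—is selected. Once this identification is clear, the proof reduces to a one-line application of Theorem~\ref{thm1}\ref{it:thm1-1} and Proposition~\ref{BA}.
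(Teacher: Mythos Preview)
Your proposal is correct and follows essentially the same route as the paper: the paper simply notes that $[\alpha_1,\dots,\alpha_{m-1},\overline{\alpha_m,\delta_{m+1}}]_\Delta \in \mathbb Q^{(\alpha)}$ if and only if $\alpha_{m+1}=\alpha$, and then applies Theorem~\ref{thm1}\ref{it:thm1-1} together with Proposition~\ref{BA}. Your write-up adds the explicit justification via \eqref{eq:ends} for this parity identification, which the paper leaves implicit, but the structure is identical.
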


\begin{example}
For $x = \sqrt 2 -1 = [\overline{\infty, 1 , 0 ,1}]_\Delta$, we have
\begin{align*}
\mathscr{B}^{(0)} (x) &= \left \{ 
[\overline{\infty,1}]_\Delta = 0, \,
[\infty, 1,0,\overline{1,\infty}]_\Delta = \frac 25, \,
[\infty, 1,0,1,\infty,1,0,\overline{1,\infty}]_\Delta = \frac{12}{29}, \,
 \dots \right \}, \\
\mathscr{B}^{(1)}(x) &= \left \{ [\overline{\infty,0}]_\Delta = 1, \,
[\infty,1,\overline{0,\infty}]_\Delta = \frac 13, \,
[\infty,1,0,1,\overline{\infty,0}]_\Delta = \frac 37, \,
 \dots \right \}, \\
 \mathscr{B}^{(\infty)}(x) &= \left \{ [\infty,1,0,\overline{1,0}]_\Delta = [\infty,\overline{1,0}]_\Delta = \frac 12, \,
[\infty, 1,0,1,\infty, \overline{1,0}]_\Delta = \frac 5{12}, \,
 \dots \right \}.
\end{align*}
\end{example}

Finally, we complete the proof of Theorem~\ref{thm2} by the following proposition.

\begin{proposition}\label{abBA}
Let $x = [ \alpha_1, \alpha_2, \alpha_3, \cdots ]_\Delta$ be an irrational number with regular continued fraction expansion $[a_0 ; a_1, \dots]$ and $\{\alpha, \beta, \gamma\} = \{ 0,1,\infty\}$.
Then, we have
\begin{equation}\label{eq:abBA}
\mathscr{B}^{(\alpha,\beta)}(x) = \big\{ [ \alpha_1, \dots, \alpha_{m-1}, \overline{ \alpha_m, \gamma }]_\Delta \in \mathbb Q \, | \, \alpha_m \ne \gamma, \ m \ge |a_0| +1 \big\}.
\end{equation}
\end{proposition}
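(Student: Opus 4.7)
The plan is to apply part (ii) of Theorem~\ref{thm1}, which gives
$$\mathscr{B}^{(\alpha,\beta)}(x) = \bigl(\mathscr{B}(x) \cap \mathbb Q^{(\alpha,\beta)}\bigr) \sqcup \mathscr{S}_\gamma(x),$$
and then to identify each piece with a subset of the right-hand side of \eqref{eq:abBA} via the explicit $\Delta$-expression descriptions supplied by Proposition~\ref{BA}. A preliminary observation is that by \eqref{eq:ends}, every rational $[\alpha_1, \dots, \alpha_{m-1}, \overline{\alpha_m, \gamma}]_\Delta$ with $\alpha_m \ne \gamma$ lies in $\mathbb Q^{(\sigma)}$, where $\sigma$ is determined by $\{\alpha_m, \gamma, \sigma\} = \{0,1,\infty\}$; since $\sigma \ne \gamma$, every candidate in the right-hand side automatically lies in $\mathbb Q^{(\alpha,\beta)}$, so only its membership in $\mathscr{B}^{(\alpha,\beta)}(x)$ is at stake.

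For the $\mathscr{S}_\gamma$ piece, Proposition~\ref{BA} directly describes the elements as $[\alpha_1, \dots, \alpha_{m-1}, \overline{\alpha_m, \delta_{m+1}}]_\Delta$ with $\delta_{m+1} = \gamma$ together with the auxiliary condition $m = |a_0|+1$ or $\alpha_{m-1} = \alpha_{m+1}$. Since $\delta_{m+1} = \gamma$ forces $\alpha_m \ne \gamma$, these lie in the right-hand side at the same index $m$. The main case analysis is for $\mathscr{B}(x) \cap \mathbb Q^{(\alpha,\beta)}$. By Proposition~\ref{BA}, each such element is $[\alpha_1, \dots, \alpha_{m-1}, \overline{\alpha_m, \alpha_{m+1}}]_\Delta$ with $m \ge |a_0|+1$, whose parity $\delta_{m+1}$ satisfies $\delta_{m+1} \ne \gamma$, i.e., $\gamma \in \{\alpha_m, \alpha_{m+1}\}$. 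I split into two subcases: if $\alpha_{m+1} = \gamma$, the element is already in right-hand side form at index $m$; if instead $\alpha_m = \gamma$ (so $\alpha_{m+1} \ne \gamma$ by admissibility), I would use the palindromic identity from \eqref{eq:alpha}, namely $[\overline{\gamma, \alpha_{m+1}}]_\Delta = [\overline{\alpha_{m+1}, \gamma}]_\Delta$, combined with the fact that $H_\gamma$ fixes the two points of $\{0,1,\infty\} \setminus \{\gamma\}$, to rewrite the expression as $[\alpha_1, \dots, \alpha_m, \overline{\alpha_{m+1}, \gamma}]_\Delta$, which is in right-hand side form at the shifted index $m+1$. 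Combining the two pieces gives the inclusion $\mathscr{B}^{(\alpha,\beta)}(x) \subset$ (right-hand side).

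Conversely, starting from an element $[\alpha_1, \dots, \alpha_{m-1}, \overline{\alpha_m, \gamma}]_\Delta$ of the right-hand side: if $\alpha_{m+1} = \gamma$ then the element coincides with $[\alpha_1, \dots, \alpha_{m-1}, \overline{\alpha_m, \alpha_{m+1}}]_\Delta \in \mathscr{B}(x) \cap \mathbb Q^{(\alpha,\beta)}$ by Proposition~\ref{BA}; otherwise $\delta_{m+1} = \gamma$, and Proposition~\ref{BA} places it in either $\mathscr{S}_\gamma(x)$ (when $m = |a_0|+1$ or $\alpha_{m-1} = \alpha_{m+1}$) or in $\mathscr{B}(x) \cap \mathbb Q^{(\alpha_{m+1})} \subset \mathscr{B}(x) \cap \mathbb Q^{(\alpha,\beta)}$ (using $\alpha_{m+1} \in \{\alpha, \beta\}$), and both are contained in $\mathscr{B}^{(\alpha,\beta)}(x)$ by \eqref{def_cor1} and \eqref{eq:pr:Salpha-2}.

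The part I expect to be the main obstacle is the re-indexing step in the subcase $\alpha_m = \gamma$: one must verify that the rewritten expression is a genuine $\Delta$-expression (admissibility follows from $\alpha_{m+1} \ne \alpha_m = \gamma$), that the two expressions genuinely represent the same rational (which reduces to $H_{\alpha_m}$ fixing the endpoint $\sigma$ since $\alpha_m = \gamma \ne \sigma$), and that the shifted index still satisfies the standing constraint, which is automatic as $m+1 > m \ge |a_0|+1$.
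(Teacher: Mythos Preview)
Your proposal is correct and follows essentially the same route as the paper's proof: both invoke Theorem~\ref{thm1}(ii) to decompose $\mathscr{B}^{(\alpha,\beta)}(x)$ as $(\mathscr{B}(x)\cap\mathbb Q^{(\alpha,\beta)})\sqcup\mathscr{S}_\gamma(x)$, then use Proposition~\ref{BA} together with the re-indexing identity $[\alpha_1,\dots,\alpha_{m-1},\overline{\gamma,\alpha_{m+1}}]_\Delta=[\alpha_1,\dots,\alpha_m,\overline{\alpha_{m+1},\gamma}]_\Delta$ (valid since $H_\gamma$ fixes the vertex opposite to $\gamma$) to match both pieces with the right-hand side. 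Your handling of the reverse inclusion is even slightly cleaner than the paper's: where the paper splits the case $\alpha_{m+1}\ne\gamma$, $m\ge|a_0|+2$ into the subcases $\alpha_{m-1}=\gamma$ and $\alpha_{m-1}\ne\gamma$ and re-indexes in the former, you invoke the $\mathscr{B}(x)$/$\mathscr{S}_{\delta_{m+1}}(x)$ dichotomy of Proposition~\ref{BA} directly and read off the parity as $\alpha_{m+1}\in\{\alpha,\beta\}$, which avoids the second re-indexing.
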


\begin{proof}
Let $\mathscr{C}$ be the set on the right-hand side of \eqref{eq:abBA}. 
To show $\mathscr{B}^{(\alpha,\beta)}= \mathscr{C}$, we apply Theorem~\ref{thm1} (ii) which states that   $\mathscr{B}^{(\alpha,\beta)}(x) = \left(  \mathscr{B}(x) \cap \mathbb Q^{(\alpha,\beta)} \right) \cup \mathscr{S}_\gamma(x)$. 
We first show $\mathscr{B}(x)\cap \mathbb Q^{(\alpha,\beta)}\subset \mathscr{C}$.
Let $[ \alpha_1, \dots, \alpha_{m-1}, \overline{ \alpha_m, \alpha_{m+1}}]_\Delta \in \mathscr{B}(x) \cap \mathbb Q^{(\alpha,\beta)}$ with $m \ge |a_0| +1$. 
Then, we have $[ \overline{ \alpha_m, \alpha_{m+1}}]_\Delta = \alpha$ or $\beta$, 
which is followed that either $\alpha_m = \gamma$ or $\alpha_{m+1} = \gamma$.
If $\alpha_{m} = \gamma$, then $\alpha_{m+1} \ne \gamma$ and
$[ \alpha_1, \dots, \alpha_{m-1}, \overline{ \alpha_m, \alpha_{m+1}}]_\Delta = [ \alpha_1, \dots, \alpha_m, \overline{ \alpha_{m+1}, \gamma}]_\Delta$.
Therefore, we have $\mathscr{B}(x)\cap \mathbb Q^{(\alpha,\beta)}\subset \mathscr{C}$.
On the other hand,  $\mathscr{S}_\gamma(x)\subset \mathscr{C}$ is a direct consequence of Proposition~\ref{BA}.
Hence, we have $\mathscr{B}^{(\alpha,\beta)}\subset  \mathscr{C}$.

Now, it is left to show $\mathscr{C}\subset \mathscr{B}^{(\alpha,\beta)}$.
Suppose that $\alpha_m \ne \gamma$ and $m \ge |a_0|+1$. 
If $\alpha_{m+1} = \gamma$, then 
$$[ \alpha_1, \dots, \alpha_{m-1}, \overline{ \alpha_m, \gamma }]_\Delta \in \mathscr{B}(x) \cap \mathbb Q^{(\alpha,\beta)}\subset \mathscr{B}^{(\alpha,\beta)}(x).
$$
If $\alpha_{m+1} \ne \gamma$, $\alpha_{m-1} = \gamma$ and $m \ge |a_0|+2$, then 
$$[ \alpha_1, \dots, \alpha_{m-1}, \overline{ \alpha_m, \gamma }]_\Delta = [ \alpha_1, \dots, \alpha_{m-2}, \overline{ \gamma, \alpha_m }]_\Delta \in \mathscr{B}(x) \cap \mathbb Q^{(\alpha,\beta)}\subset \mathscr{B}^{(\alpha,\beta)}(x).
$$
If $\alpha_{m+1} \ne \gamma$, $\alpha_{m-1} \ne \gamma$ and $m \ge |a_0|+2$, then 
$\alpha_{m-1} = \alpha_{m+1}$. 
Therefore, by Proposition~\ref{BA}, we have
$$[ \alpha_1, \dots, \alpha_{m-1}, \overline{ \alpha_m, \gamma }]_\Delta \in \mathscr{S}_\gamma(x)\subset \mathscr{B}^{(\alpha,\beta)}(x).
$$
If $\alpha_{m+1} \ne \gamma$ and $m = |a_0| +1$, then $\delta_{m+1} = \gamma$. Thus by Proposition~\ref{BA}, we have
\begin{equation*}
[ \alpha_1, \dots, \alpha_{m-1}, \overline{ \alpha_m, \gamma }]_\Delta \in \mathscr{S}_\gamma(x) \subset \mathscr{B}^{(\alpha,\beta)}(x). \qedhere
\end{equation*}
\end{proof}

\begin{example}\label{ex:ab}
For $x = \sqrt 2 -1 = [\overline{\infty, 1 , 0 ,1}]_\Delta$, we have
\begin{align*}
\mathscr{B}^{(0,1)} (x) &= \left \{ [\overline{\infty,1}]_\Delta = 0, \,[\infty,1,\overline{0,\infty}]_\Delta = \frac 13, \,
[\infty,1,0,\overline{1,\infty}]_\Delta = \frac 25, \,
 \dots \right \}, \\
\mathscr{B}^{(0,\infty)}(x) &= \left \{  
[\overline{\infty,1}]_\Delta = 0, \,
[\infty,\overline{1,0}]_\Delta = \frac 12, \,
[\infty,1,0,\overline{1,\infty}]_\Delta = \frac 25, \,
\dots \right \}, \\
\mathscr{B}^{(1,\infty)}(x) &= \left \{ 
 [\overline{\infty,0}]_\Delta = 1, \,
[\infty,\overline{1,0}]_\Delta = \frac 12, \,
[\infty, 1,0,1,\overline{\infty, 0}]_\Delta = \frac 37, \,
 \dots \right \}.
\end{align*}
\end{example}

\section{Continued fraction maps and $\Delta$-expression}\label{sec:maps}
The continued fraction map is an interval map that plays a role in the left shift map of the continued fraction expansions.
In this section, we reconstruct various continued fraction maps using the $\Delta$-expression.
The three reflections $H_0, H_1, H_\infty$ induces semi-regular continued fractions of the form 
$$
a_0 + \cfrac{\epsilon_0}{a_1 + \cfrac{\epsilon_1}{a_2 + \cfrac{\epsilon_2}{\ddots}}},
$$
where $a_0 \in \mathbb Z$, $a_i \in \mathbb N$ for $i \ge 1$ and $\epsilon_i \in \{ -1, 1\}$ for $i \ge 0$ with the condition $a_i+\epsilon_i\ge 1$ for all $i\ge 1$, and $a_i+\epsilon_i\ge 2$ infinitely often (see \cites{DK00}).  
We elaborate on the relationship between several semi-regular continued fraction algorithms and the $\Delta$-expression.

\subsection{Farey map and Gauss map} 
The Gauss map given by $\lfloor \frac1x\rfloor$ for $x\in (0,1]$, is the continued fraction map of the regular continued fraction.
The Farey map defined by $\frac{x}{1-x}$ for $[0,\frac 12]$, and $\frac{1-x}{x}$ for $[\frac 12,1]$, is a slow-down map of the Gauss map.

We first represent the Farey map in terms of the $\Delta$-expression.
Define $\varphi : I_\infty \to I_\infty $, where  $I_\infty= [0,1]$, by  
\begin{equation*}
\varphi ([ \infty, \alpha_2, \alpha_3, \cdots ]_\Delta) =
[ \sigma (\alpha_{2}), \sigma (\alpha_{3}), \sigma(\alpha_{4}), \cdots ]_\Delta,
\end{equation*}
where $\sigma$ is the permutation on $\{ 0,1, \infty \}$ satisfying that 
$\sigma(\infty) = 1$, $\sigma(\alpha_2) = \infty$.
Thus, we have, for $x = [ \infty, \alpha_2 , \alpha_3 , \cdots ]_\Delta$,
$$\varphi (x) = \begin{cases}
[\sigma_{JKJ}(\alpha_2) , \sigma_{JKJ}(\alpha_3) , \cdots ]_\Delta = JKJH_\infty \cdot [ \infty, \alpha_2 , \alpha_3 , \cdots ]_\Delta , & \text{ if } \alpha_2 = 1, \\
[\sigma_{KJ}(\alpha_2) , \sigma_{KJ}(\alpha_3) , \cdots ]_\Delta = KJH_\infty \cdot [ \infty, \alpha_2 , \alpha_3 , \cdots ]_\Delta , &\text{ if } \alpha_2 = 0.
\end{cases} 
$$ 
Thus we derive that $\varphi$ is the Farey map:
$$\varphi (x) = \begin{cases}
JKJH_\infty \cdot x = \frac{x}{1-x}, & \text{ if } \ 0 \le x < \frac 12, \\
KJH_\infty \cdot x = \frac{1-x}{x}, & \text{ if } \ \frac 12\le x \le 1.
\end{cases} 
$$ 

By the fact that the Gauss map is an acceleration of the Farey map, we represent the Gauss map in terms of the $\Delta$-expression.
Suppose that $x = [ \infty, \alpha_2, \alpha_3, \cdots ]_\Delta\in I_\infty$ has the regular continued fraction $[0; a_1, a_2, \dots ]$.
We define $m = m(x) = \min\{ j \ge 1  | \, \alpha_{j+1} = 0\}$. 
Then $m = a_1$.
Define $\psi : I_\infty \to I_\infty $ by 
\begin{equation*}
\psi ( [ \infty, \alpha_2 , \alpha_3 , \cdots ]_\Delta  ) =
[ \sigma (\alpha_{m+1}), \sigma (\alpha_{m+2}), \sigma(\alpha_{m+3}), \cdots ]_\Delta,
\end{equation*}
where $\sigma$ is the permutation on $\{ 0,1, \infty \}$ satisfying that 
$\sigma(\alpha_{m}) = 1$, $\sigma(\alpha_{m+1}) = \infty$.
If $m$ is even, then $\alpha_{m+1}=0$, $\alpha_m=1$, and $x= [(\infty,1)^{m/2},0,\alpha_{m+2},\dots]\in(H_\infty H_1)^{m/2}\cdot I_0 = [\frac 1{m+1},\frac1m]$.
If $m$ is odd, then $\alpha_{m+1}=0$, $\alpha_m=\infty$, and $x= [(\infty,1)^{(m-1)/2},0,\alpha_{m+2},\dots]\in(H_\infty H_1)^{(m-1)/2}H_\infty\cdot I_0 = [\frac 1{m+1},\frac1m]$.
Thus, $\psi$ is the Gauss map, since 
$$
\psi(x)  = \begin{cases}
J \cdot [ 0, \alpha_{m+2}, \alpha_{m+3}, \cdots ]_\Delta =
J (H_1 H_\infty)^{m/2} \cdot x = \frac 1x - m, &\text{ if $m$ is even},\\
KJ \cdot [ 0, \alpha_{m+2}, \alpha_{m+3}, \cdots ]_\Delta =
KJ H_\infty (H_1 H_\infty)^{(m-1)/2} \cdot x = \frac 1x - m, &\text{ if $m$ is odd}.
\end{cases}
$$ 

In the definition of $\psi$, if we change the condition of $\sigma$ slightly, then we have another continued fraction map which is called \emph{the by-excess continued fraction map}.
The maps is one of $\alpha$-continued fraction with $\alpha=0$ and it is defined by $\lceil \frac 1x \rceil-\frac1x$ for $x\in(0,1]$ (see e.g. \cite{LMNN10}).
Let $\psi_- : I_\infty = [0,1] \to I_\infty$ be given by  
\begin{equation*}
\psi_- ( [ \infty, \alpha_2 , \alpha_3 , \cdots ]_\Delta  ) =
[ \sigma_- (\alpha_{m+1}), \sigma_- (\alpha_{m+2}), \sigma_-(\alpha_{m+3}), \cdots ]_\Delta,
\end{equation*}
where $\sigma_-$ is the permutation on $\{ 0,1, \infty \}$ satisfying that 
$\sigma_-(\alpha_{m}) = 0$, $\sigma_-(\alpha_{m+1}) = \infty$.
Then, we verify that $\psi_-$ is the by-excess continued fraction map by
$$
\psi_-(x)  = \begin{cases}
KJ \cdot [ 0, \alpha_{m+2}, \alpha_{m+3}, \cdots ]_\Delta =
KJ (H_1 H_\infty)^{m/2} \cdot x = m+1 - \frac 1x, &\text{ if $m$ is even},\\
J \cdot [ 0, \alpha_{m+2}, \alpha_{m+3}, \cdots ]_\Delta =
J H_\infty (H_1 H_\infty)^{(m-1)/2} \cdot x = m+1 - \frac 1x, &\text{ if $m$ is odd}.
\end{cases}
$$ 

\subsection{Even and odd continued fraction maps}

The even continued fraction map and the odd continued fraction map are the maps defined on the unit interval by the distance from $\frac 1x$ to its nearest even integer, or its nearest odd integer, respectively.
We represent these maps by using the permutations $J$ and $KJ$ as follows.

Let $x = [ \infty, \alpha_2, \alpha_3, \cdots ]_\Delta \in I_\infty$ and $m = m(x) = \min \{ j \ge 1 \, | \, \alpha_{j+1} = 0 \}$.
Define a map $\psi_J :I_\infty\to I_{\infty}$ by
$$
\psi_J (x) = [\sigma_J (0), \sigma_J (\alpha_{m+2}), \cdots ]_\Delta
= [\infty ,\sigma_J (\alpha_{m+2}), \cdots ]_\Delta.
$$
Then, $\psi_J$ is the even continued fraction map, since
$$
\psi_J(x) = J \cdot x = \begin{cases}
J (H_1 H_\infty)^{m/2} \cdot x = \frac 1x -m, &\text{ if $m$ is even},\\
J H_\infty (H_1 H_\infty)^{(m-1)/2} \cdot x = m+1 - \frac 1x, &\text{ if $m$ is odd}.
\end{cases}
$$ 

Define a map $\psi_{KJ} :I_\infty\to I_{\infty}$ by
$$
\psi_{KJ} (x) = [\sigma_{KJ} (0), \sigma_{KJ} (\alpha_{m+2}), \cdots ]_\Delta
= [\infty ,\sigma_{KJ} (\alpha_{m+2}), \cdots ]_\Delta
$$
Then, $\psi_{KJ}$ is the odd continued fraction map, since
$$
\psi_{KJ}(x) = KJ \cdot x =
\begin{cases}
KJ (H_1 H_\infty)^{m/2} \cdot x = m+1 - \frac 1x, &\text{ if $m$ is even},\\
KJ H_\infty (H_1 H_\infty)^{(m-1)/2} \cdot x = \frac 1x -m, &\text{ if $m$ is odd}.
\end{cases}
$$ 
Moreover, we can verify that the convergent (or finite truncation) of the even continued fraction is a best $(0,\infty)$-rational approximation, and vice versa, which is proven by Short and Walker \cite{SW14}.

In fact, let $(m_i)_{i=0}^\infty$ be a sequence defined by
$$
m_{i} = \min \{ m \ge m_{i-1} \, | \, \alpha_{m+1} = \infty \ \text{ for even } i \ \text{ and }  \alpha_{m+1} = 0 \ \text{ for odd } i \} \  \text{ with } \ m_0 = 0.
$$
Let $m_i +1 \le m \le m_{i+1}$ such that $\alpha_{m} \ne 1$.
Then, $\alpha_m = \alpha_{m_i+1} = J^i \cdot \infty$. 
Thus, we have
\begin{equation}\label{eq}
\begin{split}
[\alpha_1, \dots , \alpha_{m-1} , \overline{\alpha_m, 1}] 
&= [\alpha_{1}, \dots , \alpha_{m_i}, \overline{\alpha_{m_i+1}, 1}] \\
&= H_{\alpha_1} \cdots H_{\alpha_{m_i}} \cdot [\overline{\alpha_{m_i+1}, 1}] 
= H_{\alpha_1} \cdots H_{\alpha_{m_i}} J^i \cdot 0.
\end{split}
\end{equation}
For an irrational number $x \in I_\infty$, let $\left(\psi_J\right)_{x}^{-i}$ be the inverse map of $(\psi_J)^i$ on the maximal closed interval $I_x$ containing $x$ on which $(\psi_J)^i$ is continuous.
Then, by
$$
(\psi_J)^i (x) = [J^i \cdot \alpha_{m_i+1}, J^i \cdot \alpha_{m_i+2}, \cdots ]_\Delta, 
$$
we have
\begin{equation*}
\left(\psi_J\right)_{x}^{-i} (0) = H_{\alpha_1} H_{\alpha_2} \cdots, H_{\alpha_{m_i}} J^i \cdot 0
= [\alpha_{1}, \dots , \alpha_{m_i-1}, \overline{\alpha_{m_i}, 1}]. 
\end{equation*}
Therefore, by \eqref{eq}, we have
$$\mathscr{B}^{(0,\infty)}(x) = \{ \left(\psi_J\right)_{x}^{-i} (0) \, | \, i \ge 1\}.$$
Hence, the even continued fraction gives  best $(0,\infty)$-rational approximations.

\subsection{Odd-odd continued fraction}
The authors explored a continued fraction algorithm generating the best $(1)$-rational approximations, which is named \emph{the odd-odd continued fracion} \cite{KLL22}. The odd-odd continued fraction map is defined by $\frac{kx-(k-1)}{k-(k+1)x}$ if $x\in[\frac{k-1}{k},\frac{2k-1}{2k+1}]$, and $\frac{k-(k+1)x}{kx-(k-1)}$ if $x\in[\frac{2k-1}{2k+1},\frac{k}{k+1}]$ for $k\ge1$.

Let $x = [ \infty, \alpha_2, \alpha_3, \cdots ]_\Delta \in I_\infty$ be a real number 
and $\tilde m = \tilde m (x) = \min \{ j \ge 1 \, | \, \alpha_{j} = 1\}$.
Define a map $\psi_1 :I_\infty\to I_{\infty}$ by
$$
\psi_1 (x) = \begin{cases} [\alpha_{\tilde m+1}, \alpha_{\tilde m+2}, \cdots ]_\Delta, &\text{ if } \alpha_{\tilde m+1} = \infty, \\
[ \sigma_J( \alpha_{\tilde m+1}), \sigma_J (\alpha_{\tilde m+2}), \cdots ]_\Delta, &\text{ if } \alpha_{\tilde m+1} = 0.
\end{cases} 
$$
Then, by a similar argument as in the previous subsections, we can check that $\psi_1$ is the odd-odd continued fraction map.

For an irrational number $x \in I_\infty$, let $\left(\psi_1\right)^{-i}_{x}$ be the inverse map of $(\psi_1)^i$ on the maximal closed interval $I_x$ containing $x$ on which $(\psi_1)^i$ is continuous,
and let $(\tilde m_i)_{i=1}^\infty$ be a sequence defined by $\alpha_{\tilde m_i} = 1$.
Then, we have
\begin{align*}
\left(\psi_1^{-i}\right)_{x} (1) &= H_{\alpha_1} H_{\alpha_2} \cdots H_{\alpha_{\tilde m_i}} \cdot 1
= [\alpha_1, \alpha_2, \dots, \alpha_{\tilde m_i-1}, 1, \overline{\alpha_{\tilde m_{i}+1}, \delta_{\tilde m_i+1} }] \\  
&= [\alpha_1, \alpha_2, \dots, \alpha_{\tilde m_i-1}, 1,  \alpha_{\tilde m_i+1}, \dots, \alpha_{\tilde m_{i+1}-2}, \overline{\alpha_{\tilde m_{i+1}-1}, \delta_{\tilde m_{i+1}} }],
\end{align*}
where $\delta_{\tilde m_i+1}$ is defined as in \eqref{eq:del}.
Therefore, we reconfirm that the best $(1)$-rational approximation is generated by $\psi_1$:
$$\mathscr{B}^{(1)}(x) = \{ \left(\psi_1^{-i}\right)_{x} (1) \, | \, i \ge 1\}.$$

\section*{Acknowledgement}

D.K.~was supported by the National Research Foundation of Korea (NRF-2018R1A2B6001624, RS-2023-00245719) and the Dongguk University Research Fund of 2022.
S.L. was supported by the Institute for Basic Science (IBS-R003-D1).

\begin{bibdiv}
\begin{biblist}


\bib{BM18}{article}{
   author={Boca, Florin P.},
   author={Merriman, Claire},
   title={Coding of geodesics on some modular surfaces and applications to
   odd and even continued fractions},
   journal={Indag. Math. (N.S.)},
   volume={29},
   date={2018},
   number={5},
   pages={1214--1234},
   issn={0019-3577},
   review={\MR{3853422}},
   doi={10.1016/j.indag.2018.05.004},
}

\bib{DK00}{article}{
   author={Dajani, Karma},
   author={Kraaikamp, Cor},
   title={``The mother of all continued fractions''},
   note={Dedicated to the memory of Anzelm Iwanik},
   journal={Colloq. Math.},
   volume={84/85},
   date={2000},
   pages={109--123},
   issn={0010-1354},
   review={\MR{1778844}},
   doi={10.4064/cm-84/85-1-109-123},
}

\bib{EW11}{book}{
   author={Einsiedler, Manfred},
   author={Ward, Thomas},
   title={Ergodic theory with a view towards number theory},
   series={Graduate Texts in Mathematics},
   volume={259},
   publisher={Springer-Verlag London, Ltd., London},
   date={2011},
   pages={xviii+481},
   isbn={978-0-85729-020-5},
   review={\MR{2723325}},
   doi={10.1007/978-0-85729-021-2},
}




\bib{Khi64}{book}{
   author={Khinchin, A. Ya.},
   title={Continued fractions},
   edition={Translated from the third (1961) Russian edition},
   note={With a preface by B. V. Gnedenko;
   Reprint of the 1964 translation},
   publisher={Dover Publications, Inc., Mineola, NY},
   date={1997},
   pages={xii+95},
   isbn={0-486-69630-8},
   review={\MR{1451873}},
}

%

\bib{KLL22}{article}{
   author={Kim, Dong Han},
   author={Lee, Seul Bee},
   author={Liao, Lingmin},
   title={Odd-odd continued fraction algorithm},
   journal={Monatsh. Math.},
   volume={198},
   date={2022},
   number={2},
   pages={323--344},
   issn={0026-9255},
   review={\MR{4421912}},
   doi={10.1007/s00605-022-01704-2},
}


\bib{KL96}{article}{
   author={Kraaikamp, Cornelis},
   author={Lopes, Artur},
   title={The theta group and the continued fraction expansion with even
   partial quotients},
   journal={Geom. Dedicata},
   volume={59},
   date={1996},
   number={3},
   pages={293--333},
   issn={0046-5755},
   review={\MR{1371228}},
   doi={10.1007/BF00181695},
}

%

\bib{LMNN10}{article}{
   author={Luzzi, Laura},
   author={Marmi, Stefano},
   author={Nakada, Hitoshi},
   author={Natsui, Rie},
   title={Generalized Brjuno functions associated to $\alpha$-continued
   fractions},
   journal={J. Approx. Theory},
   volume={162},
   date={2010},
   number={1},
   pages={24--41},
   issn={0021-9045},
   review={\MR{2565823}},
   doi={10.1016/j.jat.2009.02.004},
}




\bib{Sch82}{article}{
   author={Schweiger, Fritz},
   title={Continued fractions with odd and even partial quotients},
   journal={Arbeitsber. Math. Inst. Univ. Salzburg},
   volume={4},
   date={1982},
   pages={59--70},
}

\bib{Sch84}{article}{
   author={Schweiger, Fritz},
   title={On the approximation by continued fractions with odd and even partial quotients},
   journal={Arbeitsber. Math. Inst. Univ. Salzburg},
   volume={1},
   date={1984},
   number={2},
   pages={105--114},
}


\bib{Ser85}{article}{
   author={Series, Caroline},
   title={The modular surface and continued fractions},
   journal={J. London Math. Soc. (2)},
   volume={31},
   date={1985},
   number={1},
   pages={69--80},
   issn={0024-6107},
   review={\MR{810563}},
   doi={10.1112/jlms/s2-31.1.69},
}

\bib{Ser91}{article}{
   author={Series, Caroline},
   title={Geometrical methods of symbolic coding},
   conference={
      title={Ergodic theory, symbolic dynamics, and hyperbolic spaces},
      address={Trieste},
      date={1989},
   },
   book={
      series={Oxford Sci. Publ.},
      publisher={Oxford Univ. Press, New York},
   },
   isbn={0-19-853390-X},
   isbn={0-19-859685-5},
   date={1991},
   pages={125--151},
   review={\MR{1130175}},
}

\bib{SW14}{article}{
   author={Short, Ian},
   author={Walker, Mairi},
   title={Even-integer continued fractions and the Farey tree},
   conference={
      title={Symmetries in graphs, maps, and polytopes},
   },
   book={
      series={Springer Proc. Math. Stat.},
      volume={159},
      publisher={Springer, [Cham]},
   },
   date={2016},
   pages={287--300},
   review={\MR{3516227}},
   doi={10.1007/978-3-319-30451-9{\_}15},
}


\end{biblist}
\end{bibdiv}

\end{document}